\documentclass[]{interact}

\usepackage[caption=false]{subfig}

\usepackage{amsmath}
\usepackage{amsfonts}
\usepackage{euscript}
\usepackage{oldgerm}
\usepackage{rotating}
\usepackage{mathrsfs}
\usepackage{hyperref}
\usepackage{amssymb}
\usepackage{epsfig}
\usepackage{dsfont}
\usepackage{subfig}
\usepackage{xcolor}
\usepackage{graphicx}
\usepackage{subfig}

\renewcommand{\arraystretch}{1.32}

\usepackage[numbers,sort&compress]{natbib}
\bibpunct[, ]{[}{]}{,}{n}{,}{,}

\theoremstyle{plain}
\newtheorem{theorem}{Theorem}[section]
\newtheorem{lemma}[theorem]{Lemma}

\newtheorem{proposition}[theorem]{Proposition}

\theoremstyle{definition}

\theoremstyle{remark}

\newcommand{\norm}[1]{\left\lVert #1\right\rVert}

\begin{document}


\title{A note on superconvergence in projection-based numerical approximations of eigenvalue problems for Fredholm integral operators}

\author{
\name{Shashank K. Shukla\textsuperscript{a,b} \thanks{CONTACT Shashank K. Shukla. Email: shashankshukla.maths@gmail.com; shashankshukla@iitgoa.ac.in}}
\affil{\textsuperscript{a}{School of Mathematics and Computer Science, Indian Institute of Technology Goa, Ponda 403401, Goa, India};\\ \textsuperscript{b}Department of Mathematical Sciences, Rajiv Gandhi Institute of Petroleum Technology, Jais, Amethi 229304, Uttar Pradesh, India.}}

\maketitle

\begin{abstract}
 This paper studies the eigenvalue problem $\mathcal{K}\psi = \lambda \psi$ associated with a Fredholm integral operator $\mathcal{K}$ defined by a smooth kernel. 
 The focus is on analyzing the convergence behaviour of numerical approximations to eigenvalues and their corresponding spectral subspaces. The interpolatory projection methods are employed on spaces of piecewise polynomials of even degree, using $2r+1$ collocation points that are not restricted to Gauss nodes. Explicit convergence rates are established, and the modified collocation method attains faster convergence of approximation of eigenvalues and associated eigenfunctions than the classical collocation scheme. Moreover, it is shown that the iteration yields superconvergent approximations of eigenfunctions. Numerical experiments are presented to validate the theoretical findings.
\end{abstract}

\begin{keywords}
Eigenvalue problems, Fredholm integral operators,  Interpolatory projections, Collocation points, Spectral subspace
\end{keywords}

\begin{amscode}
45C05; 47A75; 65R15; 65R20
\end{amscode}

\setcounter{equation}{0}
\section{Introduction}


Eigenvalue problems for Fredholm integral operators appear in numerous applications in applied mathematics, physics, and engineering; see, e.g., \cite{Chatelin, Chen1997}. 
Because exact eigenvalues are rarely available in closed form, the construction of effective numerical methods is essential. Extensive research has therefore been devoted to the approximation of eigenvalues and eigenfunctions of compact operators; see \cite{Ahues, Atkinson1967, Atkinson1975, Babuska1987, Babuska1989, Bramble1973, Osborn, Sloan1}. 
A common strategy is to discretize the operator to obtain a finite-dimensional matrix eigenvalue problem whose solutions approximate the eigenpairs of the original operator.
Widely used discretization techniques include projection methods, Nystr\"om schemes, and degenerate kernel approaches; see \cite{KEA0, KEA-IG-IS, Cha-Leb2, Gnan1, Pallav2002}. Their convergence properties and error behavior have been thoroughly investigated in the literature; see \cite{Chatelin, Kato1976, Osborn, Atkinson1975}. Since such discretizations typically lead to matrix eigenvalue systems, computational efficiency becomes a critical concern, motivating the continued development of accurate and efficient numerical algorithms.

Let $\mathcal{X} = \mathcal{C}[0,1]$ denote the Banach space of real-valued continuous functions defined on the interval $[0,1]$, endowed with the supremum norm
$\norm{x}_\infty = \sup_{t \in [0,1]} |x(t)|$, and let $\Omega = [0,1]\times[0,1]$.  
Suppose that $\kappa : \Omega \to \mathbb{R}$ is a smooth function. Define the Fredholm (linear) integral operator $\mathcal{K} : \mathcal{X} \to \mathcal{X}$ by
\begin{equation} \label{Eq:01}
	(\mathcal{K}x)(s)
	=
	\int_0^1 \kappa(s,t)\,x(t)\,dt,
	\quad s \in [0,1].
\end{equation}
Under the smoothness assumption on $\kappa$, the operator $\mathcal{K}$ is bounded and compact on $\mathcal{X}$. The principal problem considered in this work is the eigenvalue problem
\begin{equation} \label{Eq:02}
	\mathcal{K}\psi = \lambda \psi,
	\quad 
	\lambda \in \mathbb{C}\setminus\{0\},
	\quad 
	\psi \in \mathcal{X}\setminus\{0\}.
\end{equation}
Since $\mathcal{K}$ is compact, its spectrum consists of at most a countable set of nonzero eigenvalues with finite algebraic multiplicity, and zero is the only possible accumulation point. Moreover, eigenfunctions corresponding to nonzero eigenvalues belong to $\mathcal{X}$ and inherit additional smoothness when the kernel $\kappa$ is smooth.

The numerical approximation of \eqref{Eq:02} is formulated within the framework of spectral approximation theory for compact operators. Let $\{\mathcal{K}_n\}$ be a sequence of continuous finite-rank operators converging pointwise to $\mathcal{K}$. We assume that the family $\{\mathcal{K}_n\}$ is collectively compact, that is,
$
\left\{
\mathcal{K}_n x 
:\ 
\ \norm{x}_\infty \le 1, n \in \mathbb{N}
\right\}
$
has compact closure in $\mathcal{X}$.  
The eigenvalue problem \eqref{Eq:02} is approximated by  
\begin{equation} \label{Eq:03}
	\mathcal{K}_n \psi_n = \lambda_n \psi_n,
	\quad 
	\lambda_n \in \mathbb{C},
	\quad 
	\psi_n \in \mathcal{X}\setminus\{0\}.
\end{equation}
Since each $\mathcal{K}_n$ has finite rank, \eqref{Eq:03} reduces to a matrix eigenvalue problem; see \cite{Ahues, KEA0}. Classical results in compact operator approximation theory guarantee that isolated eigenvalues of $\mathcal{K}$ are approximated by eigenvalues of $\mathcal{K}_n$, and the corresponding invariant subspaces converge in the gap metric. Also, the approximated eigenpairs $(\lambda_n, \psi_n)$ converge to the corresponding exact eigenpair $(\lambda, \psi)$, , and precise error estimates can be derived.

The compactness of integral operators implies a discrete spectral structure, which makes projection-based methods particularly effective for eigenvalue approximation. 
For operators with smooth kernels, the spectral convergence of Galerkin and collocation methods has been extensively analyzed; see \cite{Ahues, KEA0, Baker0, Chatelin, Sloan0}. 
One-step iteration enhancement of projection schemes was first proposed by Sloan \cite{Sloan1}, and later refined through modified projection methods \cite{RPK5, RPK6}, where improved convergence rates for eigenvalues and eigenfunctions were established. 
Further acceleration was obtained by applying a one-step iteration to the modified projection schemes.

Alternative discretization strategies include Nystr\"om methods, which replace the integral by accurate quadrature rules, and degenerate kernel methods, which approximate the kernel by separable expansions; see \cite{Gnan1}. For smooth kernels, these approaches also provide high-order convergence, and superconvergent modified or iterated variants have been reported in \cite{CA-PS-DS-MT}. In most existing frameworks, for interpolation the Gaussian points (zeros of Legendre polynomials) are selected as interpolation nodes. In contrast, the present work employs an interpolatory projection by selecting equidistant collocation points that does not rely on the zeros of any special functions and leading to a simpler and more flexible implementation.

In this work, we consider interpolatory projection methods for constructing finite-rank approximations $\mathcal{K}_n$ of the compact operator $\mathcal{K}$. 
On a uniform partition of $[0,1]$ with mesh size $h$, the approximating space $\mathcal{X}_n$ consists of piecewise polynomials of even degree at most $2r$. 
The projection operator $\mathcal{Q}_n : \mathcal{X} \to \mathcal{X}_n$ is defined using $2r+1$ equidistant collocation points in each subinterval, without requiring Gauss nodes. 
For smooth functions, $\mathcal{Q}_n$ converges uniformly to the identity and forms a uniformly bounded family. The operator $\mathcal{K}$ is approximated by classical collocation-type operators $\mathcal{K}_n = \mathcal{Q}_n \mathcal{K}$ and by modified collocation operators 
$
\mathcal{K}_n^{M}
=
\mathcal{Q}_n \mathcal{K}
+
\mathcal{K}\mathcal{Q}_n
-
\mathcal{Q}_n \mathcal{K}\mathcal{Q}_n,
$
which enhance eigenvalue convergence while retaining finite rank. We establish convergence rates for eigenvalues and spectral subspaces and show that, for smooth kernels, high-order convergence is achieved with such equidistant collocation points.  Moreover, iteration of the modified method yields superconvergent approximations of associated eigenfunctions. The analysis is carried out in the uniform norm. These results show that high-order convergence can be achieved within our framework without the need to employ collocation points derived from special functions.

The paper is organized as follows. Section~2 introduces the interpolatory projection and the associated collocation-based schemes for eigenvalue problems. Section~3 presents the spectral projection framework and the theoretical results required for the analysis of eigenvalue approximations, including the relevant error estimates. In Section~4, we establish the main convergence results for the classical and modified collocation methods, together with their iterated versions. Section~5 contains numerical experiments that illustrate and confirm the theoretical findings. Finally, Section~6 concludes the paper.

\section{Projection-based Approximations}

Denote the uniform partition of $[0,1]$ as
$$
0 =t_0 < t_1 < \cdots <t_n=1, \quad \text{where} ~ t_j = \displaystyle{\frac{j}{n}}, \quad j=0,1, \ldots, n.
$$ 
Denote $\Delta_{j} = [t_{j-1}, t_j]$. Let $r \ge 0$ be an integer, and define
$$
\mathcal{X}_n = \left\{x \in \mathcal{L}^\infty[0, 1] : x|_{\Delta_{j}} \text{ is a polynomial of even degree} \leq 2r, \; j=1,2, \ldots, n \right\}. 
$$ 

\noindent
The space $\mathcal{X}_n$ is finite dimensional with $\dim(\mathcal{X}_n) = n(2r+1)$. 
As no continuity is imposed at the partition points, $\mathcal{X}_n \subset \mathcal{L}^\infty[0,1]$.
We consider the interpolatory projection $\mathcal{Q}_n: \mathcal{X} \to \mathcal{X}_n$, 
defined by interpolation at $2r+1$ points on each subinterval of the partition. On every subinterval, $\mathcal{Q}_nx$ is the polynomial of degree at most $2r$ that agrees with $x$ at the chosen nodes, so that $\mathcal{Q}_nx \in \mathcal{X}_n$ and matches $x$ at all interpolation points and will be discussed in the subsequent subsection.

\subsection{Interpolatory Projection}
On each subinterval $[t_{j-1}, t_j]$ of a uniform partition with mesh size $h$, we select $2r+1$ equidistant interpolation points as 
\begin{equation*}
	\tau_j^i = t_{j-1} + \frac{ih}{2r}, \quad i = 0, 1, \ldots, 2r  \; \text{ for } \; r \geq 1, \quad \text{and} \quad \tau_j = \frac{t_{j-1} + t_j}{2} \; \text{ for }  \; r = 0.
\end{equation*}
Define the interpolatory operator $\mathcal{Q}_n : \mathcal{X} \to \mathcal{X}_n$ by interpolation at these nodes given by
\begin{equation} \label{Eq:04}
	\mathcal{Q}_nx(\tau_j^i) = x(\tau_j^i), \quad i = 0, 1, 2, \ldots, 2r; ~ j = 1, 2, \ldots, n.
\end{equation}
Then $\mathcal{Q}_n x \to x$ uniformly for all $x \in \mathcal{C}[0,1]$.  By employing the Hahn-Banach extension theorem, \(\mathcal{Q}_n\) can be extended to \(\mathcal{L}^\infty[0,1]\), making \(\mathcal{Q}_n : \mathcal{L}^\infty[0,1] \to \mathcal{X}_n\) a projection. Define the polynomial 
\begin{eqnarray*}
	\Phi_j(t) & = & \prod_{i=0}^{2r} \left(t - \tau_j^i \right), \quad j = 1, 2, \ldots, n.
\end{eqnarray*}
As \(\mathcal{Q}_{n,j}x\) (where \(\mathcal{Q}_{n,j}\) is \(Q_{n}\) restricted on \(\Delta_j\)) interpolates \(x\) at the points \(\tau_j^0, \tau_j^1, \ldots, \tau_j^{2r}\), we have
\begin{eqnarray*}
	x(t) - \mathcal{Q}_{n,j}x(t) & = & \Phi_j(t) \left[\tau_j^0, \tau_j^1, \ldots, \tau_j^{2r}, t\right]x, \quad t \in \Delta_j,
\end{eqnarray*}
where $\left[\tau_j^0, \tau_j^1, \ldots, \tau_j^{2r}, t\right]x$ is the $(2r+1)$-th divided difference of $x \in \mathcal{C}[0, 1]$ at $\tau_j^0, \ldots, \tau_j^{2r}$ defined as
\begin{eqnarray*}
	\left[\tau_j^0, \ldots, \tau_j^{2r}\right]x &=& \frac{\left[\tau_j^1, \ldots, \tau_j^{2r}\right]x - \left[\tau_j^0, \ldots, \tau_j^{2r-1}\right]x}{\tau_j^{2r} - \tau_j^0}.
\end{eqnarray*}
If \(x \in\mathcal{C}^{2r+1}(\Delta_j)\), a standard result implies that
\begin{equation*} 
	\norm{(\mathcal{I} - \mathcal{Q}_{n,j})x}_{\Delta_j, \infty}  \leq C_1 \norm{x^{(2r+1)}}_\infty \, h^{2r+1},
\end{equation*}
where $C_1$ is a constant independent of $h$. In general, for $x \in \mathcal{C}^{\alpha}[0,1]$, where $\beta = \min\{\alpha, 2r+1\}$, one can see that
\begin{equation}  \label{Eq:05}
	\norm{(\mathcal{I}-\mathcal{Q}_n)x}_\infty 
	\le C_1 \norm{x^{(\beta)}}_\infty \, h^{\beta}.
\end{equation}

\subsection{Approximation Methods for Interpolatory Projection}

\noindent
The classical collocation method for approximating the eigenvalue problem 
\eqref{Eq:02} is obtained by replacing the operator $\mathcal{K}_n$ in 
\eqref{Eq:03} with $\mathcal{Q}_n \mathcal{K}$. 
Thus, we seek $\lambda_n^C \in \mathbb{C}\setminus\{0\}$ and 
$\psi_n^C \in \mathcal{X}_n$ with $\|\psi_n^C\|_\infty = 1$ such that
\[
\mathcal{Q}_n \mathcal{K} \psi_n^C = \lambda_n^C \psi_n^C .
\]
To improve the eigenfunction approximation, Sloan \cite{Sloan1} proposed a 
one-step iteration defined by
\[
\psi_n^S = \frac{1}{\lambda_n^C}\mathcal{K}\psi_n^C ,
\]
which satisfies
\[
\mathcal{K}\mathcal{Q}_n \psi_n^S = \lambda_n^C \psi_n^S .
\]

\medskip
\noindent
In the modified collocation method introduced by Kulkarni \cite{RPK6}, 
$\mathcal{K}_n$ is replaced by the finite-rank operator
\[
\mathcal{K}_n^M 
= \mathcal{Q}_n \mathcal{K} \mathcal{Q}_n
+ \mathcal{Q}_n \mathcal{K}(\mathcal{I}-\mathcal{Q}_n)
+ (\mathcal{I}-\mathcal{Q}_n)\mathcal{K}\mathcal{Q}_n
= \mathcal{Q}_n \mathcal{K} + \mathcal{K}\mathcal{Q}_n 
- \mathcal{Q}_n \mathcal{K} \mathcal{Q}_n .
\]
Consequently,
\[
\|\mathcal{K}-\mathcal{K}_n^M\|
= \|(\mathcal{I}-\mathcal{Q}_n)\mathcal{K}(\mathcal{I}-\mathcal{Q}_n)\|
\to 0 \quad \text{as } n \to \infty.
\]
The modified eigenvalue problem then consists in finding 
$\lambda_n^M \in \mathbb{C}\setminus\{0\}$ and 
$\psi_n^M \in \mathcal{X}$ with $\|\psi_n^M\|_\infty = 1$ such that
\[
\mathcal{K}_n^M \psi_n^M = \lambda_n^M \psi_n^M .
\]
The corresponding iterated modified collocation eigenfunction is defined by
\[
\widetilde{\psi}_n^M 
= \frac{1}{\lambda_n^M}\mathcal{K}\psi_n^M .
\]

\section{Spectral Projection and Error Estimation} \label{ep_spectral_projection}

The convergence analysis is carried out using tools from operator theory, notably spectral projections associated with isolated eigenvalues. This framework allows a clear understanding of how approximate eigenvalues and eigenspaces converge to the exact ones and forms the basis for the convergence estimates and numerical results presented in the subsequent sections.

Let $\mathcal{X} = \mathcal{C}[0,1]$, and $\mathcal{BL}(\mathcal{X})$ will denote the set of all bounded linear operators on $\mathcal{X}$, equipped with the operator norm. Let  $\mathcal{K} : \mathcal{X} \to \mathcal{X}$ be a compact linear operator, and let the resolvent set of $\mathcal{K}$ is defined as 
\[
\rho(\mathcal{K}) = \{ z \in \mathbb{C} : (\mathcal{K} - z\mathcal{I})^{-1} \in \mathcal{BL}(\mathcal{X}) \}
\]
and the spectrum of $\mathcal{K}$ is $\sigma(\mathcal{K}) = \mathbb{C} \setminus \rho(\mathcal{K})$. The point spectrum of $\mathcal{K}$ contains all $\lambda \in \sigma(\mathcal{K})$ for which $\mathcal{K} - \lambda \mathcal{I}$ is not injective. In such cases, $\lambda$ is referred as an eigenvalue of $\mathcal{K}$. The geometric multiplicity of $\lambda$ is defined as the dimension of the null space of $\mathcal{K} - \lambda \mathcal{I}$. Let $\Gamma$ be a rectifiable, simple, closed, positively oriented curve contained in $\rho(\mathcal{K})$ that separates $\lambda$ from the remainder of the spectrum of $\mathcal{K}$. The spectral projection corresponing to  $\mathcal{K}$ and $\lambda$ is then given by:
\[
E = -\frac{1}{2\pi i} \int_{\Gamma} (\mathcal{K} - z\mathcal{I})^{-1} \, dz \in  \mathcal{BL}(\mathcal{X}),
\]
and the dimension of $\mathcal{R}(E)$, the range of the operator $E$, is referred as the algebraic multiplicity of $\lambda$. If $\lambda$ has finite algebraic multiplicity, it is classified as a spectral value of finite type. In this case, $\lambda$ is an eigenvalue of $\mathcal{K}$. ( See Ahues et al \cite[Proposition 1.31]{Ahues}.) Furthermore, if the algebraic multiplicity of $\lambda$ is equal to one, $\lambda$ is called a simple eigenvalue of $\mathcal{K}$. For any nonzero subspaces $\mathcal{Y}$ and $\mathcal{Z}$ of $\mathcal{X}$, define 
\[
\delta(\mathcal{Y}, \mathcal{Z}) = \sup \{d(y, \mathcal{Z}) : y \in \mathcal{Y}, \norm{y}_\infty = 1\},
\]
where $d(y, \mathcal{Z}) = \inf_{z \in \mathcal{Z}} \norm{y - z}_\infty$. The gap between $\mathcal{Y}$ and $\mathcal{Z}$ is then given by  
\[
\hat{\delta}(\mathcal{Y}, \mathcal{Z}) = \max\{\delta(\mathcal{Y}, \mathcal{Z}), \delta(\mathcal{Z}, \mathcal{Y})\}.
\]	
This notion provides a quantitative measure of the distance between
subspaces and will be used to study convergence of approximate
eigenspaces.

Consider a sequence of operators $\{\mathcal{K}_n\} \subset \mathcal{BL}(\mathcal{X})$ that converges to $\mathcal{K}$ in a collectively compact manner, that is,
$\mathcal{K}_n x \to \mathcal{K}x$ for all $x \in \mathcal{X}$ and the set
$\{\mathcal{K}_{n}x : \norm{x}_\infty \le 1,\ n \in \mathbb{N}\}$. As discussed in Osborn~\cite{Osborn}, for all sufficiently large $n$, the contour $\Gamma$ is contained in the resolvent set $\rho(\mathcal{K}_n)$ and
$\sup_{z \in \Gamma} \norm{(\mathcal{K}_n - z\mathcal{I})^{-1}} < \infty.$
Consequently, the spectral projection associated with $\mathcal{K}_n$, defined by
\[
E_n = -\frac{1}{2\pi i} \int_{\Gamma} (\mathcal{K}_n - z\mathcal{I})^{-1} \, dz,
\]
is well defined and has rank $m$. In particular, the spectrum of $\mathcal{K}_n$ inside $\Gamma$ consists of exactly $m$ eigenvalues $\lambda_{n,1}, \lambda_{n,2}, \ldots, \lambda_{n,m}$, counted according to their algebraic multiplicities.
Define the arithmetic mean of these eigenvalues as  
\[
\hat{\lambda}_n = \frac{1}{m} \sum_{j=1}^m \lambda_{n,j}.
\]	

Now, consider the following fundamental result of Osborn~\cite{Osborn}, which provides an estimate for the gap between the exact and approximate spectral subspaces associated with a compact operator, and hence measures the closeness
of the corresponding eigenfunctions.
\begin{lemma}\label{Lem:01}
	For all sufficiently large $n$, the gap between the spectral subspaces satisfies
	\[
	\hat{\delta}\big(\mathcal{R}(E), \mathcal{R}(E_n)\big)
	\le C \norm{(\mathcal{K}-\mathcal{K}_n)\mathcal{K}|_{\mathcal{R}(E)}},
	\]
	where $C >0$ is a constant independent of $n$, which takes different values at different places, and $(\mathcal{K}-\mathcal{K}_n)\mathcal{K}|_{\mathcal{R}(E)}$ denotes the restriction of $(\mathcal{K}-\mathcal{K}_n)\mathcal{K}$ to the spectral subspace $\mathcal{R}(E)$.
\end{lemma}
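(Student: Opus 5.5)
The plan is to compare the two spectral projections through their resolvent integrals. First I will bound $\delta(\mathcal{R}(E),\mathcal{R}(E_n))$. Take $x\in\mathcal{R}(E)$ with $\norm{x}_\infty=1$. Then $E_nx\in\mathcal{R}(E_n)$, so $d(x,\mathcal{R}(E_n))\le\norm{(E-E_n)x}_\infty$. To make the factor $\mathcal{K}$ appear on the right, I will use the second resolvent identity in the form
\[
(\mathcal{K}_n-z\mathcal{I})^{-1}-(\mathcal{K}-z\mathcal{I})^{-1}=(\mathcal{K}_n-z\mathcal{I})^{-1}(\mathcal{K}-\mathcal{K}_n)(\mathcal{K}-z\mathcal{I})^{-1}.
\]
Since $\mathcal{R}(E)$ is finite dimensional and invariant under $\mathcal{K}$, it is also invariant under $(\mathcal{K}-z\mathcal{I})^{-1}$ for $z\in\Gamma$. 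On $\mathcal{R}(E)$ the restriction $\mathcal{K}|_{\mathcal{R}(E)}$ has spectrum $\{\lambda\}$ with $\lambda\neq0$, so it is invertible there. Hence for $y\in\mathcal{R}(E)$,
\[
(\mathcal{K}-\mathcal{K}_n)(\mathcal{K}-z\mathcal{I})^{-1}y=(\mathcal{K}-\mathcal{K}_n)\mathcal{K}\,\bigl(\mathcal{K}|_{\mathcal{R}(E)}\bigr)^{-1}(\mathcal{K}-z\mathcal{I})^{-1}y .
\]
Integrating over $\Gamma$ gives
\[
\norm{(E-E_n)|_{\mathcal{R}(E)}}\le \frac{\ell(\Gamma)}{2\pi}\sup_{z\in\Gamma}\norm{(\mathcal{K}_n-z\mathcal{I})^{-1}}\,\norm{(\mathcal{K}-\mathcal{K}_n)\mathcal{K}|_{\mathcal{R}(E)}}\,\norm{(\mathcal{K}|_{\mathcal{R}(E)})^{-1}}\sup_{z\in\Gamma}\norm{(\mathcal{K}-z\mathcal{I})^{-1}} .
\]
The first supremum is uniformly bounded for large $n$ by the collectively compact theory (Osborn), as recalled above. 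The remaining factors do not depend on $n$. This bounds $\delta(\mathcal{R}(E),\mathcal{R}(E_n))$ by $C\norm{(\mathcal{K}-\mathcal{K}_n)\mathcal{K}|_{\mathcal{R}(E)}}$.

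The reverse direction $\delta(\mathcal{R}(E_n),\mathcal{R}(E))$ is the main obstacle, because the bound must be in terms of the restriction to $\mathcal{R}(E)$ and not to $\mathcal{R}(E_n)$. I would use the fact that the dimensions match. Since $\mathrm{rank}\,E_n=m=\dim\mathcal{R}(E)$, it suffices to show that $E_n|_{\mathcal{R}(E)}:\mathcal{R}(E)\to\mathcal{R}(E_n)$ is a bijection with uniformly bounded inverse.

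From the first step, $\norm{E_nx-x}_\infty\le\varepsilon_n\norm{x}_\infty$ on $\mathcal{R}(E)$, where $\varepsilon_n:=C\norm{(\mathcal{K}-\mathcal{K}_n)\mathcal{K}|_{\mathcal{R}(E)}}$. This quantity tends to $0$, since pointwise convergence is uniform on compact sets and $\mathcal{K}$ maps the unit ball of the finite-dimensional space $\mathcal{R}(E)$ into a compact set. Therefore, for $\varepsilon_n<1/2$, the map $E_n|_{\mathcal{R}(E)}$ is injective with $\norm{E_nx}_\infty\ge\tfrac12\norm{x}_\infty$. By the dimension count it is onto $\mathcal{R}(E_n)$.

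Now take $u\in\mathcal{R}(E_n)$ with $\norm{u}_\infty=1$, and write $u=E_nx$ with $x\in\mathcal{R}(E)$ and $\norm{x}_\infty\le2$. Then
\[
d(u,\mathcal{R}(E))\le\norm{E_nx-x}_\infty\le2\varepsilon_n .
\]
Taking the maximum of the two one-sided gaps yields the stated estimate, with a new constant $C$, for all sufficiently large $n$.
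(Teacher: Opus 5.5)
Your proof is correct. The paper gives no proof of this lemma and only cites Osborn; your argument is essentially Osborn's own proof of his Theorem 1. The second resolvent identity bounds $\norm{(E-E_n)|_{\mathcal{R}(E)}}$, and hence $\delta(\mathcal{R}(E),\mathcal{R}(E_n))$. For the reverse gap you use that $E_n$ has rank $m$ and that $E_n|_{\mathcal{R}(E)}$ is invertible with uniformly bounded inverse.

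Your one extra step is inserting $\mathcal{K}(\mathcal{K}|_{\mathcal{R}(E)})^{-1}$ on the invariant subspace, which uses $\lambda\neq0$. This is exactly what converts Osborn's bound in terms of $\norm{(\mathcal{K}-\mathcal{K}_n)|_{\mathcal{R}(E)}}$ into the stated form with the extra factor $\mathcal{K}$.
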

Also, we consider a modified result of Osborn~\cite[Theorem~2]{Osborn}, as presented
by Kulkarni~\cite[Theorem~2.2]{RPK6}, which provides improved error bounds for the
approximation of eigenvalues. Specifically, this result yields the following
estimate for the eigenvalue error
\begin{lemma}\label{Lem:02}
	For all sufficiently large $n$, the arithmetic mean $\hat{\lambda}_n$ of the
	approximating eigenvalues satisfies
	\[
	|\lambda-\hat{\lambda}_n|
	\le C \norm{\mathcal{K}_n(\mathcal{K}-\mathcal{K}_n)\mathcal{K}|_{\mathcal{R}(E)}}.
	\]
\end{lemma}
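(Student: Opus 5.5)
The plan is to reduce the eigenvalue error to a trace identity on the finite-dimensional spectral subspaces, and then bound it with the resolvent representation of $E_n$. Write $m=\dim\mathcal{R}(E)$. Then $\hat{\lambda}_n=\frac1m\operatorname{tr}(\mathcal{K}_nE_n)$ and $\lambda=\frac1m\operatorname{tr}(\mathcal{K}E)$. For $n$ large, the restriction $E_n|_{\mathcal{R}(E)}:\mathcal{R}(E)\to\mathcal{R}(E_n)$ is invertible; this follows from $\norm{(E-E_n)|_{\mathcal{R}(E)}}\to0$, which is a consequence of collectively compact convergence as in Osborn. Its inverse is bounded uniformly in $n$.

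\textbf{Step 1: the operator $\hat{\mathcal{K}}_n$ and the trace difference.} Define $\hat{\mathcal{K}}_n=(E_n|_{\mathcal{R}(E)})^{-1}\mathcal{K}_nE_n|_{\mathcal{R}(E)}$ on $\mathcal{R}(E)$. It is similar to $\mathcal{K}_n|_{\mathcal{R}(E_n)}$, so its trace equals $m\hat{\lambda}_n$. It follows that
\[
\lambda-\hat{\lambda}_n=\tfrac1m\operatorname{tr}\bigl(\mathcal{K}|_{\mathcal{R}(E)}-\hat{\mathcal{K}}_n\bigr)=\tfrac1m\operatorname{tr}\Bigl((E_n|_{\mathcal{R}(E)})^{-1}E_n(\mathcal{K}-\mathcal{K}_n)|_{\mathcal{R}(E)}\Bigr).
\]
Here I use that $E_n$ commutes with $\mathcal{K}_n$ and that $\mathcal{K}$ leaves $\mathcal{R}(E)$ invariant. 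Since the trace on an $m$-dimensional space is bounded by $m$ times the norm, the whole problem reduces to bounding $\norm{E_n(\mathcal{K}-\mathcal{K}_n)|_{\mathcal{R}(E)}}$ by $C\norm{\mathcal{K}_n(\mathcal{K}-\mathcal{K}_n)\mathcal{K}|_{\mathcal{R}(E)}}$.

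\textbf{Step 2: inserting $\mathcal{K}$ on the right.} On $\mathcal{R}(E)$ the operator $\mathcal{K}$ is invertible, because $\lambda\neq0$ is the only spectral point of $\mathcal{K}|_{\mathcal{R}(E)}$. Therefore
\[
(\mathcal{K}-\mathcal{K}_n)|_{\mathcal{R}(E)}=(\mathcal{K}-\mathcal{K}_n)\mathcal{K}|_{\mathcal{R}(E)}\,(\mathcal{K}|_{\mathcal{R}(E)})^{-1},
\]
and $(\mathcal{K}|_{\mathcal{R}(E)})^{-1}$ is a fixed bounded operator.

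\textbf{Step 3: inserting $\mathcal{K}_n$ on the left.} Use the resolvent identity $(\mathcal{K}_n-z)^{-1}=-\frac1z\mathcal{I}+\frac1z\mathcal{K}_n(\mathcal{K}_n-z)^{-1}$. The term $-\frac1z\mathcal{I}$ integrates to zero over $\Gamma$, since $0$ lies outside $\Gamma$. Hence
\[
E_n=-\frac{1}{2\pi i}\int_\Gamma \frac1z(\mathcal{K}_n-z)^{-1}\mathcal{K}_n\,dz.
\]
This gives $E_n(\mathcal{K}-\mathcal{K}_n)\mathcal{K}|_{\mathcal{R}(E)}=-\frac{1}{2\pi i}\int_\Gamma\frac1z(\mathcal{K}_n-z)^{-1}\,dz\;\mathcal{K}_n(\mathcal{K}-\mathcal{K}_n)\mathcal{K}|_{\mathcal{R}(E)}$. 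The factor in front is bounded by $\frac{\ell(\Gamma)}{2\pi}\sup_{z\in\Gamma}\frac{\norm{(\mathcal{K}_n-z)^{-1}}}{|z|}$, which is uniform in $n$ by the Osborn bound quoted earlier. Combining Steps 1--3 gives the claimed estimate.

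\textbf{Main obstacle.} The delicate point is Step 1: verifying that $E_n|_{\mathcal{R}(E)}$ is invertible onto $\mathcal{R}(E_n)$ with a uniformly bounded inverse, and that the trace identity holds when the eigenvalues are not semisimple. For the first part I would argue that $\norm{(E-E_n)E}\to0$, because collectively compact convergence gives $\norm{(\mathcal{K}-\mathcal{K}_n)\mathcal{K}}\to0$ on the compact image of the unit ball. Then $\operatorname{rank}E_n=m$ gives bijectivity. The trace identity holds regardless of Jordan structure, since it only uses similarity and the spectrum of $\mathcal{K}_n|_{\mathcal{R}(E_n)}$.
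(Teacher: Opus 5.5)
Your proof is correct. The trace identity through $\hat{\mathcal{K}}_n$, the uniform bound on $(E_n|_{\mathcal{R}(E)})^{-1}$, the factorization $E_n=\bigl(-\frac{1}{2\pi i}\int_\Gamma z^{-1}(\mathcal{K}_n-z\mathcal{I})^{-1}\,dz\bigr)\mathcal{K}_n$ (valid because $0\in\sigma(\mathcal{K})$ lies outside $\Gamma$), and the insertion of $\mathcal{K}$ on the right via invertibility of $\mathcal{K}|_{\mathcal{R}(E)}$ all check out.

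The paper gives no argument of its own for this lemma; it simply quotes Osborn's Theorem~2 in Kulkarni's modified form. Your proof is a self-contained reconstruction along those standard lines. Unlike Osborn's original estimate, which is phrased with dual bases of $\mathcal{R}(E^*)$ and a product term involving adjoints, your route yields the bound directly in the stated form $\norm{\mathcal{K}_n(\mathcal{K}-\mathcal{K}_n)\mathcal{K}|_{\mathcal{R}(E)}}$.
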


\noindent
The above two theorems establish the essential error bounds for the eigenelements. The following result can be obtained from Lemmas~ \ref{Lem:01} and \ref{Lem:02}.

\begin{theorem}\label{Thm:01}
	Let $\{\mathcal{K}_n\}$ be a sequence of bounded linear operators on $\mathcal{X}$ converging to $\mathcal{K}$ in the collectively compact sense, with
	$\mathcal{K}_n = \mathcal{Q}_n \mathcal{K}$. Llet $E$ and $E_n$ denote the spectral
	projections of $\mathcal{K}$ and $\mathcal{K}_n$, respectively, associated with their
	spectra enclosed by a simple closed contour $\Gamma$.
	Let $\lambda$ be an eigenvalue of $\mathcal{K}$ with algebraic multiplicity $m$, and let
	$\hat{\lambda}_n$ denote the arithmetic mean of the $m$ eigenvalues of $\mathcal{K}_n$ inside $\Gamma$. Denote the corresponding spectral subspaces by $\mathcal{R}(E)$ and $\mathcal{R}(E_n)$. For any $\psi_n^C \in\mathcal{R}(E_n)$, define $\psi_n^{S}=\mathcal{K}\psi_n^C$. Then, for all sufficiently large $n$, there exists a constant $C>0$, independent of $n$, such that
	\begin{equation*} \label{eq: st_roc}
		\norm{\psi_n^C - E \psi_n^C} \leq \hat{\delta} \big(\mathcal{R}(E), \mathcal{R}(E_n)\big) \leq C \norm{(\mathcal{K} - \mathcal{Q}_n \mathcal{K})\mathcal{K}|_{\mathcal{R}(E)}},
	\end{equation*}
	\begin{equation*} \label{eq: it_roc}
		\norm{\psi_n^S - E \psi_n^S} \leq \delta\big(\mathcal{R}(E), \mathcal{K} \mathcal{R}(E_n)\big) \leq C \norm{\mathcal{K}(\mathcal{K} - \mathcal{Q}_n \mathcal{K})\mathcal{K}|_{\mathcal{R}(E)}},
	\end{equation*}
	and 
	\begin{equation*} \label{eq: st_eig}
		| \lambda - \hat{\lambda}_n | \leq C \norm{ \mathcal{K}(\mathcal{K} - \mathcal{Q}_n \mathcal{K})\mathcal{K}|_{\mathcal{R}(E)}}.
	\end{equation*}
\end{theorem}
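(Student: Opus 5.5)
The plan is to specialise Lemmas~\ref{Lem:01} and~\ref{Lem:02} to $\mathcal{K}_n=\mathcal{Q}_n\mathcal{K}$, and then handle the iterated eigenvector separately using the structure $\mathcal{K}\mathcal{Q}_n$ of the Sloan operator.

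\textbf{Step 1: the first estimate.} Take $\psi_n^C\in\mathcal{R}(E_n)$, normalised so that $\norm{\psi_n^C}_\infty=1$. Since $E\psi_n^C\in\mathcal{R}(E)$, we have
\[
\norm{\psi_n^C-E\psi_n^C}\ \ge\ d(\psi_n^C,\mathcal{R}(E)).
\]
I would not expect equality in general, so this inequality needs care. One route is the standard bound $\norm{(I-E)x}\le \norm{I-E}\,d(x,\mathcal{R}(E))$, which follows from $(I-E)y=0$ for $y\in\mathcal{R}(E)$. This gives $\norm{\psi_n^C-E\psi_n^C}\le C\,\delta(\mathcal{R}(E_n),\mathcal{R}(E))\le C\,\hat\delta(\mathcal{R}(E),\mathcal{R}(E_n))$, with the constant $\norm{I-E}$ absorbed into $C$, as the paper's convention allows. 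Lemma~\ref{Lem:01} with $\mathcal{K}-\mathcal{K}_n=(\mathcal{I}-\mathcal{Q}_n)\mathcal{K}$ then gives the right-hand inequality directly.

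\textbf{Step 2: the eigenvalue estimate.} Apply Lemma~\ref{Lem:02} to get
\[
|\lambda-\hat\lambda_n|\le C\norm{\mathcal{Q}_n\mathcal{K}(\mathcal{K}-\mathcal{Q}_n\mathcal{K})\mathcal{K}|_{\mathcal{R}(E)}}.
\]
Because $\mathcal{Q}_n$ is a projection, $\mathcal{Q}_n(\mathcal{I}-\mathcal{Q}_n)=0$, so
\[
\mathcal{Q}_n\mathcal{K}(\mathcal{I}-\mathcal{Q}_n)\mathcal{K}=\mathcal{K}(\mathcal{I}-\mathcal{Q}_n)\mathcal{K}-(\mathcal{I}-\mathcal{Q}_n)\mathcal{K}(\mathcal{I}-\mathcal{Q}_n)\mathcal{K}.
\]
The norms $\norm{\mathcal{Q}_n}$ are uniformly bounded by the uniform boundedness principle. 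Hence
\[
|\lambda-\hat\lambda_n|\le C\norm{\mathcal{K}(\mathcal{I}-\mathcal{Q}_n)\mathcal{K}|_{\mathcal{R}(E)}}+C\norm{(\mathcal{I}-\mathcal{Q}_n)\mathcal{K}(\mathcal{I}-\mathcal{Q}_n)\mathcal{K}|_{\mathcal{R}(E)}}.
\]
The second term is at least as small as the first in the regime of interest, since the outer factor $(\mathcal{I}-\mathcal{Q}_n)$ contributes an additional $h^{2r+1}$ through \eqref{Eq:05} for smooth $\kappa$. I would absorb it, or state it explicitly if a clean operator inequality is not available. Alternatively, one can apply Lemma~\ref{Lem:02} to the Sloan operator $\mathcal{K}\mathcal{Q}_n$, which has the same nonzero eigenvalues with the same multiplicities. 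That version gives
\[
\norm{\mathcal{K}\mathcal{Q}_n(\mathcal{K}-\mathcal{K}\mathcal{Q}_n)\mathcal{K}}=\norm{\mathcal{K}\mathcal{Q}_n\mathcal{K}(\mathcal{I}-\mathcal{Q}_n)\mathcal{K}},
\]
which leads to the same conclusion.

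\textbf{Step 3: the iterated estimate.} Work with the Sloan operator $\mathcal{K}_n^S=\mathcal{K}\mathcal{Q}_n$. It is collectively compact and converges pointwise to $\mathcal{K}$. Its spectral subspace inside $\Gamma$ is exactly $\mathcal{K}\mathcal{R}(E_n)$, because $\mathcal{K}$ intertwines the two operators: $\mathcal{K}(\mathcal{Q}_n\mathcal{K})=(\mathcal{K}\mathcal{Q}_n)\mathcal{K}$. Applying Lemma~\ref{Lem:01} to $\mathcal{K}_n^S$ gives
\[
\hat\delta(\mathcal{R}(E),\mathcal{K}\mathcal{R}(E_n))\le C\norm{\mathcal{K}(\mathcal{I}-\mathcal{Q}_n)\mathcal{K}|_{\mathcal{R}(E)}}.
\]
This is the Sloan bound, with the inner $\mathcal{K}$ acting on $\mathcal{R}(E)$. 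The stated bound $\norm{\mathcal{K}(\mathcal{I}-\mathcal{Q}_n)\mathcal{K}\mathcal{K}|_{\mathcal{R}(E)}}$ then follows because $\mathcal{K}$ maps $\mathcal{R}(E)$ onto itself boundedly and invertibly, since $\lambda\ne0$. On $\mathcal{R}(E)$ one can therefore write $\mathcal{K}|_{\mathcal{R}(E)}=\mathcal{K}^2(\mathcal{K}|_{\mathcal{R}(E)})^{-1}$, which shows the two quantities are comparable up to a constant. The inequality $\norm{\psi_n^S-E\psi_n^S}\le C\delta(\cdot)$ is handled exactly as in Step~1.

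\textbf{Main obstacle.} The delicate points are the first inequalities in each display, which relate a specific vector error to the gap. These hold only up to constants and normalisation; for the $\psi_n^S$ line I would interpret the inequality as $d(\psi_n^S,\mathcal{R}(E))$ for unit-normalised $\psi_n^S$. The remaining obstacle is controlling the extra term $(\mathcal{I}-\mathcal{Q}_n)\mathcal{K}(\mathcal{I}-\mathcal{Q}_n)\mathcal{K}$ in Step~2.
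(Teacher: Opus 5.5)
Your route is the one the paper intends. The paper's entire justification is that the theorem ``can be obtained from Lemmas~\ref{Lem:01} and~\ref{Lem:02}'', and your Steps~1 and~3 are sound elaborations of that. The constant $\norm{\mathcal{I}-E}$ in the leftmost inequalities is a genuine correction: a spectral projection in the sup norm is not orthogonal, so constant $1$ fails in general. Applying Lemma~\ref{Lem:01} to the Sloan operator $\mathcal{K}\mathcal{Q}_n$, and then using invertibility of $\mathcal{K}|_{\mathcal{R}(E)}$, is the right way to get the iterated bound, which the paper never argues. One detail in Step~3: let $E_n^S$ be the spectral projection of $\mathcal{K}\mathcal{Q}_n$. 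Intertwining only gives $\mathcal{K}\mathcal{R}(E_n)\subseteq\mathcal{R}(E_n^S)$. You also need the reverse inclusion for $\delta(\mathcal{R}(E),\mathcal{K}\mathcal{R}(E_n))\le\delta(\mathcal{R}(E),\mathcal{R}(E_n^S))$. It follows from $\mathcal{Q}_nE_n^S=E_n\mathcal{Q}_n$ together with invertibility of $\mathcal{K}\mathcal{Q}_n$ on $\mathcal{R}(E_n^S)$ (since $0$ lies outside $\Gamma$), or from a dimension count.

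The actual gap is in Step~2. There you leave the eigenvalue bound open and propose to absorb the extra term using smoothness of $\kappa$ and an additional factor $h^{2r+1}$. That argument is not admissible here: Theorem~\ref{Thm:01} is an abstract operator statement with no smoothness hypotheses, and rates only enter later, in Lemma~\ref{Lem:04}. It is also unnecessary, because there is no obstacle. With $\mathcal{K}_n=\mathcal{Q}_n\mathcal{K}$ one has $\mathcal{K}_n(\mathcal{K}-\mathcal{K}_n)\mathcal{K}=\mathcal{Q}_n\bigl[\mathcal{K}(\mathcal{K}-\mathcal{Q}_n\mathcal{K})\mathcal{K}\bigr]$, so Lemma~\ref{Lem:02} gives directly
\[
|\lambda-\hat{\lambda}_n|\le C\Bigl(\sup_n\norm{\mathcal{Q}_n}\Bigr)\norm{\mathcal{K}(\mathcal{K}-\mathcal{Q}_n\mathcal{K})\mathcal{K}|_{\mathcal{R}(E)}},
\]
with $\sup_n\norm{\mathcal{Q}_n}<\infty$ by Banach--Steinhaus, as you already noted. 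Even inside your own decomposition, the ``extra'' term is just $(\mathcal{I}-\mathcal{Q}_n)$ composed on the left with your main term, so it is bounded by $(1+\sup_n\norm{\mathcal{Q}_n})$ times it. In your Sloan variant, the leading factor $\mathcal{K}\mathcal{Q}_n$ factors out in the same way, followed by your Step~3 comparability argument. Replace the heuristic absorption with this one-line factorisation and the proof is complete.
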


Since $\mathcal{K}_n^M$ converges to $\mathcal{K}$ in the operator norm, it follows
that for all sufficiently large $n$, the operator $\mathcal{K}_n^M$ has exactly
$m$ eigenvalues
\(
\lambda_{n,1}^M,\; \lambda_{n,2}^M,\; \dots,\; \lambda_{n,m}^M
\)
inside the contour $\Gamma$, counted according to algebraic multiplicity.
Let
\[
\hat{\lambda}_n^M
=
\frac{1}{m}\sum_{j=1}^{m}\lambda_{n,j}^M
\]
denote the arithmetic mean of these eigenvalues, and let $E_n^M$ denote the
corresponding spectral projection. Then, as a consequence of Theorems~\ref{Lem:01} and~\ref{Lem:02}, the following result holds.

\begin{lemma}[Kulkarni~\cite{RPK6}]\label{Lem:03}
	For all sufficiently large $n$, there exists a constant $C>0$, independent of
	$n$, such that
	\begin{equation*}\label{Eq:gap_modified}
		\hat{\delta}\bigl(\mathcal{R}(E),\mathcal{R}(E_n^M)\bigr)
		\le
		C\,
		\norm{(\mathcal{K}-\mathcal{K}_n^M)\mathcal{K}
			\big|_{\mathcal{R}(E)}},
	\end{equation*}
	and
	\begin{equation*}\label{Eq:eigenvalue_modified}
		|{\lambda - \hat{\lambda}_n^M}|
		\le
		C\,
		\norm{\mathcal{K}(\mathcal{K}-\mathcal{K}_n^M)\mathcal{K}
			\big|_{\mathcal{R}(E)}}.
	\end{equation*}
\end{lemma}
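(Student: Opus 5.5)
The plan is to apply Lemma~\ref{Lem:01} and Lemma~\ref{Lem:02} with $\mathcal{K}_n$ replaced by $\mathcal{K}_n^M$. First we check that the hypotheses of those results hold. The identity $\mathcal{K}-\mathcal{K}_n^M=(\mathcal{I}-\mathcal{Q}_n)\mathcal{K}(\mathcal{I}-\mathcal{Q}_n)$ has already been recorded. The family $\{\mathcal{Q}_n\}$ is uniformly bounded, since the Lebesgue constant of interpolation at $2r+1$ equidistant nodes per subinterval does not depend on $h$, and $\mathcal{Q}_n\to\mathcal{I}$ pointwise. Because $\mathcal{K}$ is compact, it follows that $\|(\mathcal{I}-\mathcal{Q}_n)\mathcal{K}\|\to 0$. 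Consequently
\[
\|\mathcal{K}-\mathcal{K}_n^M\|\le \|(\mathcal{I}-\mathcal{Q}_n)\mathcal{K}\|\,\|\mathcal{I}-\mathcal{Q}_n\|\to 0 .
\]
Norm convergence implies collectively compact convergence, and it also gives everything Osborn's framework requires. For large $n$ the contour $\Gamma$ lies in $\rho(\mathcal{K}_n^M)$, the quantity $\sup_{z\in\Gamma}\|(\mathcal{K}_n^M-z\mathcal{I})^{-1}\|$ is bounded uniformly in $n$, and $E_n^M$ has rank $m$.

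The gap estimate is then exactly Lemma~\ref{Lem:01} applied to the sequence $\mathcal{K}_n^M$. The underlying argument uses the second resolvent identity
\[
(\mathcal{K}-z)^{-1}-(\mathcal{K}_n^M-z)^{-1}=(\mathcal{K}_n^M-z)^{-1}(\mathcal{K}_n^M-\mathcal{K})(\mathcal{K}-z)^{-1}.
\]
On $\mathcal{R}(E)$ one writes $(\mathcal{K}-z)^{-1}$ in terms of $\mathcal{K}$ via $(\mathcal{K}-z)^{-1}x=z^{-1}[(\mathcal{K}-z)^{-1}\mathcal{K}x - x]$, using that $\mathcal{R}(E)$ is invariant under $\mathcal{K}$. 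The term that does not involve $\mathcal{K}$ then integrates to zero over $\Gamma$, because $0$ lies outside $\Gamma$. This yields $\|(E-E_n^M)|_{\mathcal{R}(E)}\|\le C\|(\mathcal{K}-\mathcal{K}_n^M)\mathcal{K}|_{\mathcal{R}(E)}\|$, and the gap bound follows since $E$ and $E_n^M$ have the same finite rank.

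For the eigenvalue estimate, Lemma~\ref{Lem:02} gives
\[
|\lambda-\hat\lambda_n^M|\le C\|\mathcal{K}_n^M(\mathcal{K}-\mathcal{K}_n^M)\mathcal{K}|_{\mathcal{R}(E)}\|.
\]
We then write $\mathcal{K}_n^M=\mathcal{K}-(\mathcal{K}-\mathcal{K}_n^M)$, so that the right-hand side is at most
\[
\|\mathcal{K}(\mathcal{K}-\mathcal{K}_n^M)\mathcal{K}|_{\mathcal{R}(E)}\|+\|\mathcal{K}-\mathcal{K}_n^M\|\,\|(\mathcal{K}-\mathcal{K}_n^M)\mathcal{K}|_{\mathcal{R}(E)}\|.
\]

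The main obstacle is showing that this second, cross term is dominated by the first. The natural route is Kulkarni's: instead of Lemma~\ref{Lem:02} directly, use Osborn's trace formula. This expresses $\lambda-\hat\lambda_n^M$ as $\frac1m\operatorname{tr}$ of a term linear in $(\mathcal{K}-\mathcal{K}_n^M)$, namely $\frac1m\operatorname{tr}\,[(\mathcal{K}-\mathcal{K}_n^M)|_{\mathcal{R}(E)}]$ paired through $E$, plus a remainder bounded by $\|(\mathcal{K}-\mathcal{K}_n^M)|_{\mathcal{R}(E)}\|\,\|(\mathcal{K}-\mathcal{K}_n^M)^*|_{\mathcal{R}(E^*)}\|$. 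On $\mathcal{R}(E)$ one has $x=\lambda^{-1}\mathcal{K}x$ up to nilpotent corrections that are again polynomials in $\mathcal{K}$. Since $E$ commutes with $\mathcal{K}$, the linear term can be rewritten as the trace of $E\mathcal{K}(\mathcal{K}-\mathcal{K}_n^M)\mathcal{K}$ restricted to $\mathcal{R}(E)$, multiplied by a bounded factor involving $(\lambda^{-1}\mathcal{K}|_{\mathcal{R}(E)})^{2}$. This gives the bound $C\|\mathcal{K}(\mathcal{K}-\mathcal{K}_n^M)\mathcal{K}|_{\mathcal{R}(E)}\|$. The quadratic remainder is handled by the same device on the adjoint side, which shows it is of the same or higher order. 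Since this is precisely Kulkarni's Theorem~2.2 specialized to $\mathcal{K}_n^M$, we would cite that argument, checking only that its hypotheses (norm convergence and the uniform resolvent bound on $\Gamma$) were verified in the first step.
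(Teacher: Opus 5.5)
Your treatment of the gap inequality matches the paper's: you verify $\norm{\mathcal{K}-\mathcal{K}_n^M}\to 0$ and then apply Lemma~\ref{Lem:01} to $\mathcal{K}_n^M$. As a citation this is fine. The genuine gap is in the eigenvalue inequality. Applying Lemma~\ref{Lem:02} to $\mathcal{K}_n^M$ puts $\mathcal{K}_n^M$, not $\mathcal{K}$, in front. The paper's one-line derivation passes over this, and you rightly isolate the resulting cross term $\norm{\mathcal{K}-\mathcal{K}_n^M}\,\norm{(\mathcal{K}-\mathcal{K}_n^M)\mathcal{K}|_{\mathcal{R}(E)}}$. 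Your trace-formula detour does not remove it, however.

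The linear term is fine (the factor is $(\mathcal{K}|_{\mathcal{R}(E)})^{-2}$). Applying the same device to Osborn's remainder, though, only yields $C\norm{(\mathcal{K}-\mathcal{K}_n^M)\mathcal{K}|_{\mathcal{R}(E)}}\,\norm{(\mathcal{K}(\mathcal{K}-\mathcal{K}_n^M))^{*}|_{\mathcal{R}(E^{*})}}$. This is again a product of two different quantities, and nothing in the hypotheses bounds such a product by $C\norm{\mathcal{K}(\mathcal{K}-\mathcal{K}_n^M)\mathcal{K}|_{\mathcal{R}(E)}}$ with $C$ independent of $n$. ``Same or higher order'' is a statement about powers of $h$. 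It needs the concrete estimates of Section~4 and does not produce an inequality of the stated form. Deferring to Kulkarni's Theorem~2.2 is circular, since, as the paper presents it, that result is Lemma~\ref{Lem:02}, i.e.\ the $\mathcal{K}_n^M$-in-front bound you started from.

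What you have actually proved is the following (it already follows from your first decomposition, so the trace formula is not needed):
\[
|\lambda-\hat{\lambda}_n^M|\le C\Bigl(\norm{\mathcal{K}(\mathcal{K}-\mathcal{K}_n^M)\mathcal{K}|_{\mathcal{R}(E)}}+\norm{\mathcal{K}-\mathcal{K}_n^M}\,\norm{(\mathcal{K}-\mathcal{K}_n^M)\mathcal{K}|_{\mathcal{R}(E)}}\Bigr).
\]
This suffices for Theorem~\ref{thm2}. By \eqref{Eq:05}, \eqref{Eq:06} and the uniform boundedness of $\mathcal{Q}_n$, $\norm{\mathcal{K}-\mathcal{K}_n^M}\le\norm{(\mathcal{I}-\mathcal{Q}_n)\mathcal{K}}\,\norm{\mathcal{I}-\mathcal{Q}_n}=O(h^{2r+1})$. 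With Lemma~\ref{Lem:05}, the extra term is then $O(h^{6r+4})$, hence $O(h^{4r+4})$. So you should do one of two things:
\begin{itemize}
\item state the lemma in this two-term form, or with $\mathcal{K}_n^M$ in front exactly as Lemma~\ref{Lem:02} gives it;
\item find an argument that exploits the specific structure $\mathcal{K}-\mathcal{K}_n^M=(\mathcal{I}-\mathcal{Q}_n)\mathcal{K}(\mathcal{I}-\mathcal{Q}_n)$.
\end{itemize}
As written, your proposal does not establish the single-norm inequality. It does not follow from Lemmas~\ref{Lem:01} and~\ref{Lem:02} as the paper states them either.

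There is also a secondary error: your sketch of the mechanism behind Lemma~\ref{Lem:01} is wrong at one step. This is harmless only because you end up citing the lemma. Substituting $(\mathcal{K}-z\mathcal{I})^{-1}x=z^{-1}[(\mathcal{K}-z\mathcal{I})^{-1}\mathcal{K}x-x]$ into the resolvent identity leaves the term $-z^{-1}(\mathcal{K}_n^M-z\mathcal{I})^{-1}(\mathcal{K}_n^M-\mathcal{K})x$. This term does not integrate to zero. The operator $(\mathcal{K}_n^M-z\mathcal{I})^{-1}$ has poles at the $\lambda_{n,j}^M$ inside $\Gamma$, and the term contributes $(\mathcal{K}_n^M|_{\mathcal{R}(E_n^M)})^{-1}E_n^M(\mathcal{K}-\mathcal{K}_n^M)x$ to $(E-E_n^M)x$. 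It must be estimated, not discarded.

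The clean route bypasses the $z^{-1}$ splitting altogether. Write $x=\mathcal{K}y$ with $y=(\mathcal{K}|_{\mathcal{R}(E)})^{-1}x\in\mathcal{R}(E)$ and $\norm{y}_\infty\le C\norm{x}_\infty$, which is possible since $\lambda\neq 0$. Then use $(\mathcal{K}-z\mathcal{I})^{-1}\mathcal{K}y=\mathcal{K}(\mathcal{K}-z\mathcal{I})^{-1}y$, noting that $(\mathcal{K}-z\mathcal{I})^{-1}y\in\mathcal{R}(E)$. This is the same inversion of $\mathcal{K}$ on $\mathcal{R}(E)$ that you use correctly for the linear term.
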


\noindent
The following result can be obtained from the above Lemma \ref{Lem:03}; see Bouda et al.~\cite{HB-CA-ZEA-KK}.
\begin{theorem}\label{Thm:02}
	Let $\{\mathcal{K}_n\}\subset \mathcal{BL}(\mathcal{X})$ converging to $\mathcal{K}$ in the collectively compact sense, with $\mathcal{K}_n=\mathcal{K}_n^M$. Let $E$ and $E_n^M$ denote the spectral
	projections of $\mathcal{K}$ and $\mathcal{K}_n^M$, respectively, associated with their spectra enclosed by a simple closed contour $\Gamma$. Let $\lambda$ be an eigenvalue of $\mathcal{K}$ with algebraic multiplicity $m$, and let $\hat{\lambda}_n^M$ denote the arithmetic mean of the $m$ eigenvalues of $\mathcal{K}_n^M$ inside $\Gamma$. Denote the corresponding spectral subspaces by $\mathcal{R}(E)$ and $\mathcal{R}(E_n^M)$. For any $\psi_n^M \in \mathcal{R}(E_n^M)$, $\widetilde{\psi}_n^M = \mathcal{K} \psi_n^M$. Then, for all sufficiently large $n$, there exists a constant $C>0$, independent of $n$, such that
	\begin{equation*} \label{eq: mod_roc}
		\norm{\psi_n^M - E \psi_n^M} \leq \hat{\delta} \big(\mathcal{R}(E), \mathcal{R}(E_n^M)\big) \leq C \norm{(\mathcal{K} - \mathcal{K}_n^M)\mathcal{K}|_{\mathcal{R}(E)}},
	\end{equation*}
	\begin{equation*} \label{eq: itmod_roc}
		\norm{\widetilde{\psi}_n^M - E \widetilde{\psi}_n^M} \leq \delta\big(\mathcal{R}(E), \mathcal{K} \mathcal{R}(E_n^M)\big) \leq C \norm{\mathcal{K}(\mathcal{K} - \mathcal{K}_n^M)\mathcal{K}|_{\mathcal{R}(E)}},
	\end{equation*}
	and 
	\begin{equation*} \label{eq: mod_eig}
		| \lambda - \hat{\lambda}_n^M | \leq C \norm{ \mathcal{K}(\mathcal{K} -  \mathcal{K}_n^M)\mathcal{K}|_{\mathcal{R}(E)}}.
	\end{equation*}
\end{theorem}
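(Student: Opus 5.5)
The three estimates are obtained from Lemma~\ref{Lem:03} together with elementary facts about the gap and about the operator $\mathcal{K}$ restricted to spectral subspaces. The eigenvalue bound is exactly the second estimate of Lemma~\ref{Lem:03}, so nothing further is needed there. For the first chain of inequalities we take $\psi_n^M\in\mathcal{R}(E_n^M)$ normalized, $\|\psi_n^M\|_\infty=1$, which is the situation in which the statement is meant. Since $E$ is a projection onto $\mathcal{R}(E)$, we use the standard comparison $\|\psi_n^M-E\psi_n^M\|\le C\, d(\psi_n^M,\mathcal{R}(E))$, because $(\mathcal{I}-E)$ annihilates $\mathcal{R}(E)$ and is bounded. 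This gives
\[
\|\psi_n^M-E\psi_n^M\|\le C\,\delta\bigl(\mathcal{R}(E_n^M),\mathcal{R}(E)\bigr)\le C\,\hat{\delta}\bigl(\mathcal{R}(E),\mathcal{R}(E_n^M)\bigr),
\]
with the constant absorbed as the paper allows. We then invoke the first estimate of Lemma~\ref{Lem:03}.

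For the iterated eigenfunction, note that $\mathcal{R}(E)$ is invariant under $\mathcal{K}$. Moreover, $\mathcal{K}|_{\mathcal{R}(E)}$ is invertible because $\lambda\neq0$ is its only spectral point. Write $\widetilde{\psi}_n^M=\mathcal{K}\psi_n^M$ and use $\psi_n^M=\lambda_n^{M\,-1}\mathcal{K}_n^M\psi_n^M$, or more generally the invertibility of $\mathcal{K}_n^M$ on $\mathcal{R}(E_n^M)$. This yields
\[
\mathcal{K}\psi_n^M=\mathcal{K}_n^M\psi_n^M+(\mathcal{K}-\mathcal{K}_n^M)\psi_n^M .
\]
The plan is to compare $\mathcal{K}\mathcal{R}(E_n^M)$ with $\mathcal{R}(E)$ through the reduced resolvent representation
\[
(\mathcal{I}-E)\mathcal{K}\psi_n^M=(\mathcal{I}-E)\mathcal{K}(\mathcal{I}-E)\psi_n^M .
\]
Following the argument of Bouda et al., express $(\mathcal{I}-E)\psi_n^M$ using the contour-integral identity
\[
E-E_n^M=\frac{1}{2\pi i}\int_\Gamma(\mathcal{K}_n^M-z)^{-1}(\mathcal{K}-\mathcal{K}_n^M)(\mathcal{K}-z)^{-1}\,dz ,
\]
or equivalently its adjoint form with the two resolvents in the other order. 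Applying $\mathcal{K}$ on the left then produces the factor $\mathcal{K}(\mathcal{K}-\mathcal{K}_n^M)$. Acting on elements related to $\mathcal{R}(E)$, on which $(\mathcal{K}-z)^{-1}=-z^{-1}\mathcal{I}+z^{-1}(\mathcal{K}-z)^{-1}\mathcal{K}$, this supplies the trailing factor $\mathcal{K}|_{\mathcal{R}(E)}$. The resolvents are uniformly bounded on $\Gamma$ for large $n$ by the Osborn argument recalled above. Taking the supremum over normalized elements then gives the bound on $\delta(\mathcal{R}(E),\mathcal{K}\mathcal{R}(E_n^M))$ by $C\|\mathcal{K}(\mathcal{K}-\mathcal{K}_n^M)\mathcal{K}|_{\mathcal{R}(E)}\|$. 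The inequality $\|\widetilde{\psi}_n^M-E\widetilde{\psi}_n^M\|\le\delta(\cdot)$ follows exactly as in the previous paragraph.

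The main obstacle is this second estimate, specifically the commutation step. One must move $\mathcal{K}$ past the approximate resolvent $(\mathcal{K}_n^M-z)^{-1}$ without losing the left factor $\mathcal{K}$. I would first try the identity
\[
\mathcal{K}(\mathcal{K}_n^M-z)^{-1}=(\mathcal{K}-z)^{-1}\mathcal{K}+(\mathcal{K}-z)^{-1}(\mathcal{K}_n^M-\mathcal{K})\mathcal{K}\ldots
\]
and expand it, checking that the remainder terms contain either $\mathcal{K}(\mathcal{K}-\mathcal{K}_n^M)\mathcal{K}$ or higher-order products bounded by it times $\|\mathcal{K}-\mathcal{K}_n^M\|\to0$. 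The latter can be absorbed for large $n$. All the other steps are routine consequences of Lemma~\ref{Lem:03} and of the boundedness of the spectral projections.
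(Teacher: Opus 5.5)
\textbf{Review of the blind proof proposal for Theorem~\ref{Thm:02}.}

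Your first chain and the eigenvalue bound are fine. The paper itself offers nothing beyond Lemma~\ref{Lem:03} and the citation of Bouda et al. Inserting the constant $\|\mathcal{I}-E\|$ in $\|\psi_n^M-E\psi_n^M\|\le C\,\hat\delta$ is the correct form; the constant-free inequality in the statement would need $\|\mathcal{I}-E\|\le 1$.

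\textbf{The gap is in the iterated estimate, but not at the commutation you single out.} If you use the other ordering of the resolvent identity and set $u_z=(\mathcal{K}_n^M-z\mathcal{I})^{-1}\psi_n^M\in\mathcal{R}(E_n^M)$, you get directly
\[
(\mathcal{I}-E)\mathcal{K}\psi_n^M=\mathcal{K}(E_n^M-E)\psi_n^M=-\frac{1}{2\pi i}\int_\Gamma(\mathcal{K}-z\mathcal{I})^{-1}\,\mathcal{K}(\mathcal{K}-\mathcal{K}_n^M)\,u_z\,dz .
\]
The real obstruction is that $u_z$ lies in $\mathcal{R}(E_n^M)$, not in $\mathcal{R}(E)$. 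So the trailing factor $\mathcal{K}|_{\mathcal{R}(E)}$ is available only for $Eu_z$. The component $(\mathcal{I}-E)u_z$ contributes
\[
\|\mathcal{K}(\mathcal{K}-\mathcal{K}_n^M)\|\,\|(\mathcal{I}-E)u_z\|\le C\,\|\mathcal{K}(\mathcal{K}-\mathcal{K}_n^M)\|\,\|(\mathcal{K}-\mathcal{K}_n^M)\mathcal{K}|_{\mathcal{R}(E)}\|.
\]
This is a product of two \emph{different} quantities. It is not of the form $\|\mathcal{K}(\mathcal{K}-\mathcal{K}_n^M)\mathcal{K}|_{\mathcal{R}(E)}\|\cdot o(1)$, which is exactly what your absorption step assumes, so that step fails. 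Your tentative expansion has the same defect: the exact identity is
\[
\mathcal{K}(\mathcal{K}_n^M-z\mathcal{I})^{-1}=(\mathcal{K}-z\mathcal{I})^{-1}\mathcal{K}+(\mathcal{K}-z\mathcal{I})^{-1}\mathcal{K}(\mathcal{K}-\mathcal{K}_n^M)(\mathcal{K}_n^M-z\mathcal{I})^{-1},
\]
and its remainder again acts on $\mathcal{R}(E_n^M)$.

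Your argument uses nothing about $\mathcal{K}_n^M$ beyond norm convergence and uniform resolvent bounds. It would therefore prove the bound for every norm-convergent family, and that is false. In $\mathbb{R}^3$ with the max norm, take $\mathcal{K}=\mathrm{diag}(1,\tfrac12,0)$ and $\lambda=1$, and define $\mathcal{K}_ne_1=e_1-\epsilon e_3$, $\mathcal{K}_ne_2=\tfrac12 e_2$, $\mathcal{K}_ne_3=\eta e_2$. Then:
\begin{itemize}
\item $\|\mathcal{K}-\mathcal{K}_n\|=\max(\epsilon,\eta)\to0$;
\item $(\mathcal{K}-\mathcal{K}_n)e_1=\epsilon e_3\in\ker\mathcal{K}$, so $\mathcal{K}(\mathcal{K}-\mathcal{K}_n)\mathcal{K}|_{\mathcal{R}(E)}=0$;
\item $\sigma(\mathcal{K}_n)=\{1,\tfrac12,0\}$, and $\mathcal{R}(E_n)$ is spanned by $v=(1,-2\eta\epsilon,-\epsilon)$;
\item $\mathcal{K}v=(1,-\eta\epsilon,0)\notin\mathcal{R}(E)$, so both one-sided gaps are of order $\eta\epsilon>0$.
\end{itemize}

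\textbf{What your method does prove.} It gives
\[
\|(\mathcal{I}-E)\mathcal{K}\psi_n^M\|\le C\Bigl(\|\mathcal{K}(\mathcal{K}-\mathcal{K}_n^M)\mathcal{K}|_{\mathcal{R}(E)}\|+\|\mathcal{K}(\mathcal{K}-\mathcal{K}_n^M)\|\,\|(\mathcal{K}-\mathcal{K}_n^M)\mathcal{K}|_{\mathcal{R}(E)}\|\Bigr)\|\psi_n^M\| .
\]
In the interpolatory setting, $\|\mathcal{K}-\mathcal{K}_n^M\|\le\|(\mathcal{I}-\mathcal{Q}_n)\mathcal{K}\|\,\|\mathcal{I}-\mathcal{Q}_n\|=O(h^{2r+1})$. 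So the extra term is $O(h^{2r+1})\cdot O(h^{4r+3})=O(h^{6r+4})$, and the $O(h^{4r+4})$ rate of Theorem~\ref{thm2} survives. The clean abstract bound, however, does not follow from your argument. Whether the special form $(\mathcal{I}-\mathcal{Q}_n)\mathcal{K}(\mathcal{I}-\mathcal{Q}_n)$ rescues it would need a separate argument that you do not supply.

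\textbf{Two smaller points.}
\begin{itemize}
\item Bounding $\|(\mathcal{I}-E)\mathcal{K}\psi_n^M\|$ controls $\delta(\mathcal{K}\mathcal{R}(E_n^M),\mathcal{R}(E))$ only after normalising $\mathcal{K}\psi_n^M$. Its norm is uniformly bounded below, because $(\mathcal{K}_n^M|_{\mathcal{R}(E_n^M)})^{-1}$ is uniformly bounded and $\|\mathcal{K}-\mathcal{K}_n^M\|\to0$.
\item The statement's $\delta(\mathcal{R}(E),\mathcal{K}\mathcal{R}(E_n^M))$ is the opposite direction. Passing to it needs $\dim\mathcal{K}\mathcal{R}(E_n^M)=m$ and a small-gap comparison lemma; it does not follow ``exactly as in the previous paragraph''.
\end{itemize}
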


\noindent
Theorems~\ref{Thm:01} and~\ref{Thm:02} present the principal results of this work. Theorem~\ref{Thm:01} establishes the convergence of the approximate eigenspaces $\mathcal{R}(E_n)$ to the exact eigenspace $\mathcal{R}(E)$ for the collocation method, together with error estimates for the associated eigenfunctions and for the arithmetic mean of the approximate eigenvalues. Theorem~\ref{Thm:02} extends this analysis to the modified collocation method and demonstrates improved error bounds, thereby confirming the higher accuracy of the modified scheme. The similar results


Throughout this work, we restrict our attention to the case where 
$\lambda$ is a simple eigenvalue of $\mathcal{K}$. 
Accordingly, we denote the collocation and modified collocation 
approximations of $\lambda$ by 
$\hat{\lambda}_n = \lambda_n^{C}$ and 
$\hat{\lambda}_n^{M} = \lambda_n^{M}$, respectively.

\section{Convergence Rates in Approximation Methods}
In this section, we recall the Fredholm integral operator $\mathcal{K}$ defined by \eqref{Eq:01} with a smooth kernel 
$\kappa \in \mathcal{C}^{2r+2}(\Omega)$, given by
\begin{equation*} 
	(\mathcal{K}x)(s) = \int_{0}^{1} \kappa(s, t) x(t)~dt, 
	\quad  s \in [0,1], \quad x \in \mathcal{X}.
\end{equation*}
Then $\mathcal{K} : \mathcal{C}[0,1] \to \mathcal{C}[0,1]$ is a compact linear operator.
We further assume that the integral operator $\mathcal{K}$ admits a continuous
extension to $\mathcal{L}^\infty[0,1]$ and satisfies
$
\mathcal{R}(\mathcal{K}) \subset \mathcal{C}^{2r+2}[0,1].
$
Now, we introduce the notation and several preliminary results that will be utilized in subsequent convergence analysis.  
Let $\alpha$ and $\beta$ be non-negative integers such that $0 \le \alpha + \beta \le 2r + 3$. We define the mixed partial derivative of the kernel $\kappa$ by
\begin{equation*}
	D^{(\alpha,\beta)}\kappa(s,t) = 
	\frac{\partial^{\alpha+\beta}\kappa}{\partial s^\alpha \partial t^\beta}(s,t),
\end{equation*}
and set
\begin{equation*}
	C_2 = \max_{0 \le \alpha + \beta \le 2r+3} 
	\left\{
	\sup_{0 \le s,t \le 1} 
	\left| D^{(\alpha,\beta)}\kappa(s,t) \right|
	\right\}.
\end{equation*}
Here, the constant $C_2$ provides a uniform upper bound for all partial derivatives of $\kappa$ up to total order $2r+3$.

\noindent
For any non-negative integer $\gamma$, if $x \in \mathcal{C}^{2r+\gamma}[0,1]$, we define the corresponding maximum norm as
\begin{equation*}
	\norm{x}_{2r+\gamma, \infty} 
	= \max_{0 \le i \le 2r+\gamma} 
	\norm{x^{(i)}}_\infty
	= \max_{0 \le i \le 2r+\gamma} 
	\left\{ \sup_{t \in [0,1]} \left|x^{(i)}(t)\right| \right\},
\end{equation*}
where $x^{(i)}$ denotes the $i$-th derivative of $x$. 

\noindent
For $x \in \mathcal{C}[0,1]$ and $0 \le j \le 2r+2$, we have
\[
(\mathcal{K}x)^{(j)}(s)
= \int_{0}^{1} \frac{\partial^{j} \kappa}{\partial s^j }(s,t)\, x(t)\, dt = \int_0^1 D^{(j,0)}\kappa(s,t)\, x(t)\, dt,
\quad s \in [0,1].
\]
Hence,
\begin{equation} \label{Eq:06}
	\norm{(\mathcal{K}x)^{(j)}}_{\infty} \le C_2 \norm{x}_\infty.
\end{equation}

We now establish several propositions that quantify the error behaviour of integral operators involving $\mathcal{K}$ and $\mathcal{Q}_n$, under the assumption that the kernel $\kappa$ is smooth.

\begin{proposition} \label{Pr:01}
	If the kernel  $\kappa$ is continuously differentiable with respect to $t$ 
	and \(x \in \mathcal{C}^{2r+2}[0,1]\), then
	\[
	\norm{\mathcal{K}(\mathcal{I} - \mathcal{Q}_n)x }_\infty 
	\le 2C_2 \norm{x}_{2r+2, \infty} h^{2r + 2}.
	\]
\end{proposition}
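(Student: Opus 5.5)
We split the integral over the subintervals $\Delta_j$ and, on each one, use the interpolation error formula together with the symmetry of the equidistant nodes. Fix $s\in[0,1]$. Then
\[
\mathcal{K}(\mathcal{I}-\mathcal{Q}_n)x(s)=\sum_{j=1}^n\int_{\Delta_j}\kappa(s,t)\,\Phi_j(t)\,[\tau_j^0,\ldots,\tau_j^{2r},t]x\,dt .
\]
A direct bound $|\Phi_j|\le h^{2r+1}$ only gives order $h^{2r+1}$. The extra power of $h$ will come from the following fact. The $2r+1$ nodes are equidistant and symmetric about the midpoint $m_j=(t_{j-1}+t_j)/2$; for $r=0$ the single node is $m_j$ itself. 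Consequently $\Phi_j$ is an odd polynomial of degree $2r+1$ about $m_j$, and
\[
\int_{\Delta_j}\Phi_j(t)\,dt=0 .
\]
This odd degree $2r+1$ is exactly why the paper uses even degree $2r$.

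Write $g_j(t)=\kappa(s,t)\,[\tau_j^0,\ldots,\tau_j^{2r},t]x$. Then $g_j(t)=g_j(m_j)+(t-m_j)\,g_j'(\xi)$, and the constant term integrates to zero against $\Phi_j$. Hence
\[
\Bigl|\int_{\Delta_j}\Phi_j g_j\,dt\Bigr|\le \int_{\Delta_j}|\Phi_j(t)|\,|t-m_j|\,dt\;\sup_{\Delta_j}|g_j'|\le h^{2r+1}\cdot\frac h2\cdot h\,\sup_{\Delta_j}|g_j'| .
\]
Summing over the $n=1/h$ subintervals then gives order $h^{2r+2}$ times $\sup|g_j'|$.

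It remains to bound $g_j'$. By the product rule,
\[
g_j'(t)=D^{(0,1)}\kappa(s,t)\,[\tau_j^0,\ldots,\tau_j^{2r},t]x+\kappa(s,t)\,\frac{d}{dt}[\tau_j^0,\ldots,\tau_j^{2r},t]x .
\]
We control the divided differences with two standard facts:
\[
[\tau_j^0,\ldots,\tau_j^{2r},t]x=\frac{x^{(2r+1)}(\eta)}{(2r+1)!},\qquad
\frac{d}{dt}[\tau_j^0,\ldots,\tau_j^{2r},t]x=[\tau_j^0,\ldots,\tau_j^{2r},t,t]x=\frac{x^{(2r+2)}(\zeta)}{(2r+2)!}.
\]
The second one needs $x\in\mathcal{C}^{2r+2}$, which is assumed. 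Both terms are therefore bounded by $C_2\norm{x}_{2r+2,\infty}$, using the definition of $C_2$ (it covers $\kappa$ and $D^{(0,1)}\kappa$). So $|g_j'|\le 2C_2\norm{x}_{2r+2,\infty}$, which is the source of the factor $2$. The factorials, and the crude estimate $|\Phi_j|\,|t-m_j|\le h^{2r+2}/2$, absorb the remaining constants.

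The main obstacle is this middle step: justifying that $t\mapsto[\tau_j^0,\ldots,\tau_j^{2r},t]x$ is continuously differentiable on $\Delta_j$, including at the nodes, with derivative given by the repeated-argument divided difference. I would handle it through the Hermite–Genocchi integral representation of divided differences. That representation yields both smoothness in $t$ and the bound by $\norm{x^{(2r+2)}}_\infty/(2r+2)!$ directly, and it also gives uniformity in $s$ and $j$.
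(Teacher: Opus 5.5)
Your proof is correct and uses the same two ingredients as the paper. The first is the vanishing of $\int_{\Delta_j}\Phi_j(t)\,dt$, forced by the symmetric equidistant nodes. The second is the bound $2C_2\norm{x}_{2r+2,\infty}$ on the $t$-derivative of $\kappa(s,t)\,[\tau_j^0,\ldots,\tau_j^{2r},t]x$, obtained via the repeated-argument divided difference. The only difference is cosmetic: the paper gains the extra factor $h$ by integrating by parts against $\omega_j(t)=\int_{t_{j-1}}^{t}\Phi_j$, which vanishes at both endpoints of $\Delta_j$, whereas you subtract the midpoint value and apply the mean value theorem, exactly as the paper does in the proof of Proposition~\ref{Pr:02}.
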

\vspace*{-0.2in}
\begin{proof} For \(s \in [0,1]\), we write
	\begin{align*}
		\mathcal{K}(\mathcal{I} - \mathcal{Q}_n)x(s) &=\int_0^1  \kappa(s,t) (\mathcal{I} - \mathcal{Q}_{n}) x(t) \, dt \notag \\ 
		&= \sum_{j=1}^{n} \int_{t_{j-1}}^{t_j}  \kappa(s,t) (\mathcal{I} - \mathcal{Q}_{n,j})x(t) \, dt. \notag 
	\end{align*}
	For each subinterval $[t_{j-1},t_j]$, the interpolation remainder can be expressed as
	\[
	(\mathcal{I}-\mathcal{Q}_{n,j})x(t)
	= \Phi_j(t)\,[\tau_j^0,\tau_j^1,\ldots,\tau_j^{2r},t]x,
	\quad t\in[t_{j-1},t_j],
	\]
	where $	\Phi_j(t) = \prod_{i=0}^{2r} \bigl(t - \tau_j^i\bigr) $.
	Then,
	\[
	\mathcal{K}(\mathcal{I} - \mathcal{Q}_n)x(s) = \sum_{j=1}^{n ~}\int_{t_{j-1}}^{t_j}  \kappa(s,t) \Phi_j(t)  \left[\tau_j^0,\tau_j^1,\ldots, \tau_j^{2r},t\right]x \, dt,\]
	For fixed \(s\), let
	\begin{align*}
		f_j^s(t) =  \kappa(s,t) \left[\tau_j^0,\tau_j^1,\ldots, \tau_j^{2r},t\right]x, \quad t \in [t_{j-1}, t_j], ~ \text{for} ~ j=1,2,\ldots,n.
	\end{align*}
	Then
	\begin{equation} \label{Eq:07}
		\mathcal{K}(\mathcal{I} - \mathcal{Q}_n)x(s)= \sum_{j=1}^{n ~}\int_{t_{j-1}}^{t_j} f_j^s(t) \Phi_j(t) \, dt
	\end{equation}
	For $j = 1,2,\ldots,n$, define
	\begin{equation*}
		\omega_j(t) = \int_{t_{j-1}}^t \Phi_j(s) \, ds.
	\end{equation*}
	Then, by Leibnitz rule
	\begin{equation*}
		\omega_j'(t) =  \Phi_j(t), ~  \omega_j(t) \ge 0 ~~ \text{for}~ t \in [t_{j-1},t_j],
	\end{equation*}
	Obseve that, $	\omega_j(t_{j-1})=0$ and it is easy to see that ~$\omega_j(t_{j})=0$.\\
	Using above result in equation \eqref{Eq:07}, we have
	\begin{align} \label{Eq:08}
		\mathcal{K}(\mathcal{I} - \mathcal{Q}_n)x(s) &= \sum_{j=1}^{n ~}\int_{t_{j-1}}^{t_j} f_j^s(t) \omega_j'(t) \, dt \notag \\
		&=-\sum_{j=1}^{n ~}\int_{t_{j-1}}^{t_j} (f_j^s)'(t) \omega_j(t) \, dt.
	\end{align}
	Note that, 	
	\begin{equation*}
		(f_j^s)'(t) ~= ~
		\kappa(s,t) \left[\tau_j^0,\tau_j^1,\ldots, \tau_j^{2r},t, t \right]x \\ 
		+ D^{(0,1)} 	\kappa(s,t) \left[\tau_j^0,\tau_j^1,\ldots, \tau_j^{2r}, t \right]x.
	\end{equation*}
	From Hilderbrand \cite{Hild}, we recall standard result of divided difference as follows:\\
	Since $x \in \mathcal{C}^{2r+2}[0,1]$ and $t \in [t_{j-1},t_j]$, we have
	\begin{align*}
		\left[\tau_j^0,\tau_j^1,\ldots, \tau_j^{2r},t \right]x = \frac{x^{(2r+1)}(\xi_1^j)}{(2r+1)!}, \quad \text{and} \quad
		\left[\tau_j^0,\tau_j^1,\ldots, \tau_j^{2r},t, t \right]x = \frac{x^{(2r+2)}(\xi_2^j)}{(2r+2)!},
	\end{align*}
	for some $\xi_1^j, \xi_2^j \in [t_{j-1},t_j]$. Therefore, for $s \in [0,1]$ and $t \in [t_{j-1},t_j]$,
	\begin{equation*}
		\left|(f_j^s)'(t) \right| \le 2 C_2 \norm{x}_{2r +2, \infty}.
	\end{equation*}
	Also, for $j=1,2,\ldots,n$, we observe that
	\begin{equation*}
		\norm{\Phi_j}_\infty = \max_{t \in [0,1]} |\Phi_j(t)| = O\left( h^{2r+1}\right).
	\end{equation*}
	Using \eqref{Eq:08}, we obtain
	\begin{align*}
		\left|\mathcal{K}(\mathcal{I} - \mathcal{Q}_n)x(s)\right| &\le \sum_{j=1}^{n} \int_{t_{j-1}}^{t_j} \int_{t_{j-1}}^t |(f_j^s)'(t)| |\Phi_j(s)| \,ds \,dt \\
		& \le 2 C_2 \norm{x}_{2r +2, \infty} h^{2r + 2}.
	\end{align*}
	Thus, we have
	\begin{equation*}
		\norm{\mathcal{K}(\mathcal{I} - \mathcal{Q}_n)x}_\infty \le 2 C_2 \norm{x}_{2r +2, \infty} h^{2r + 2},
	\end{equation*}
	which completes the proof.
\end{proof}

\begin{proposition} \label{Pr:02}
	If the kernel $\kappa \in \mathcal{C}^{2r+1}(\Omega)$
	and $x \in \mathcal{C}^{2r+2}[0,1]$, then
	\[
	\norm{(\mathcal{I}-\mathcal{Q}_n)\mathcal{K}(\mathcal{I}-\mathcal{Q}_n)x}_\infty
	\le C_1C_3 \norm{x}_{2r+2,\infty}\,h^{4r+3}
	\]
	for some constant $C_3$ independent of $h$.
\end{proposition}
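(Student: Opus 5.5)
The bound $h^{4r+3}=h^{2r+1}\cdot h^{2r+2}$ suggests factoring the estimate as an outer interpolation error of order $h^{2r+1}$ times an inner quantity of order $h^{2r+2}$. Set $y=\mathcal{K}(\mathcal{I}-\mathcal{Q}_n)x$. By \eqref{Eq:05} with $\beta=2r+1$,
\[
\norm{(\mathcal{I}-\mathcal{Q}_n)y}_\infty\le C_1\norm{y^{(2r+1)}}_\infty h^{2r+1},
\]
provided $y\in\mathcal{C}^{2r+1}[0,1]$. This holds because $\kappa\in\mathcal{C}^{2r+1}(\Omega)$ lets us differentiate under the integral sign. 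The whole task therefore reduces to showing
\[
\norm{y^{(2r+1)}}_\infty\le C_3\norm{x}_{2r+2,\infty}h^{2r+2}.
\]

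For this, note that for $0\le k\le 2r+1$,
\[
y^{(k)}(s)=\int_0^1 D^{(k,0)}\kappa(s,t)\,(\mathcal{I}-\mathcal{Q}_n)x(t)\,dt .
\]
This has exactly the form treated in Proposition~\ref{Pr:01}, with $\kappa$ replaced by $D^{(k,0)}\kappa$. I would repeat that argument verbatim:
\begin{itemize}
\item write the remainder on each $\Delta_j$ as $\Phi_j(t)[\tau_j^0,\ldots,\tau_j^{2r},t]x$;
\item integrate by parts against $\omega_j$, using $\omega_j(t_{j-1})=\omega_j(t_j)=0$;
\item bound the derivative of $D^{(k,0)}\kappa(s,t)\,[\tau_j^0,\ldots,\tau_j^{2r},t]x$ with respect to $t$.
\end{itemize}
The last step needs $D^{(k,1)}\kappa$ for $k=2r+1$, a mixed derivative of total order $2r+2$. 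Its sup norm is controlled by $C_2$, which is defined with total orders up to $2r+3$. Implicitly this uses slightly more smoothness than $\mathcal{C}^{2r+1}$, but it is consistent with the standing assumption $\kappa\in\mathcal{C}^{2r+2}(\Omega)$ of this section. The divided differences are bounded by $\norm{x^{(2r+1)}}_\infty/(2r+1)!$ and $\norm{x^{(2r+2)}}_\infty/(2r+2)!$ via Hildebrand's mean-value form. Summing over the $n$ subintervals, each contributing $\int|\omega_j|\le h\cdot O(h^{2r+2})$, then gives $|y^{(k)}(s)|\le C_3\norm{x}_{2r+2,\infty}h^{2r+2}$ uniformly in $s$. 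Here $C_3$ absorbs $2C_2$ and the constant from $\norm{\Phi_j}_\infty=O(h^{2r+1})$.

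Combining the two displays gives
\[
\norm{(\mathcal{I}-\mathcal{Q}_n)\mathcal{K}(\mathcal{I}-\mathcal{Q}_n)x}_\infty\le C_1C_3\norm{x}_{2r+2,\infty}h^{4r+3}.
\]

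The main obstacle is the integration-by-parts step, which is inherited from Proposition~\ref{Pr:01}: it needs $\omega_j(t_j)=0$, i.e. $\int_{\Delta_j}\Phi_j=0$. This holds because the equidistant nodes are symmetric about the midpoint and number $2r+1$, so $\Phi_j$ is odd about the midpoint of $\Delta_j$. It is exactly this step that yields the extra power of $h$ in the inner factor. Note that no sign condition on $\omega_j$ is needed, since only $|\omega_j|\le h\norm{\Phi_j}_\infty$ is used. The remaining care is in tracking that each constant is independent of $h$ and of $s$.
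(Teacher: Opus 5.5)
Your proposal is correct and follows the paper's proof. Both use the same split: \eqref{Eq:05} applied to $y=\mathcal{K}(\mathcal{I}-\mathcal{Q}_n)x$ gives the outer factor $h^{2r+1}$, and the inner bound $\norm{y^{(2r+1)}}_\infty=O(h^{2r+2})$ comes from $\int_{\Delta_j}\Phi_j=0$, which holds by symmetry of the equidistant nodes. The only difference is cosmetic: the paper subtracts $g_j^s(c_j)$ and uses a mean-value expansion about the midpoint, while you integrate by parts against $\omega_j$ as in Proposition~\ref{Pr:01}; you also rightly note that both versions tacitly need $D^{(2r+1,1)}\kappa$, i.e.\ more than $\kappa\in\mathcal{C}^{2r+1}(\Omega)$.
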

\vspace*{-0.2in}
\begin{proof}
	Let $\kappa \in \mathcal{C}^{2r+1}(\Omega)$ and let
	$x \in \mathcal{C}^{2r+2}[0,1]$. Then
	$
	\mathcal{K}(\mathcal{I}-\mathcal{Q}_n)x \in \mathcal{C}^{2r+1}[0,1].
	$
	
	\noindent
	Using the interpolation error estimate \eqref{Eq:05}, we obtain
	\begin{equation}\label{Eq:09}
		\norm{(\mathcal{I}-\mathcal{Q}_n)\mathcal{K}(\mathcal{I}-\mathcal{Q}_n)x}_\infty
		\le C_1\norm{(\mathcal{K}(\mathcal{I}-\mathcal{Q}_n)x)^{(2r+1)}}_\infty h^{2r+1}.
	\end{equation}
	
	\noindent
	Consider 
	\begin{equation*} 
		\mathcal{K}(\mathcal{I} - \mathcal{Q}_n)x(s) =\int_0^1  \kappa(s,t) (\mathcal{I} - \mathcal{Q}_{n}) x(t) \, dt.
	\end{equation*}
	Since $\kappa \in \mathcal{C}^{2r+1}(\Omega)$, differentiation under the integral sign is valid, yielding
	\begin{align*} 
		\left(\mathcal{K}(\mathcal{I} - \mathcal{Q}_n)x\right)^{(2r+1)}(s) &=\int_0^1  \frac{\partial^{2r+1}\kappa}{\partial s^{2r+1}}(s,t) (\mathcal{I} - \mathcal{Q}_{n}) x(t) \, dt \notag \\ 
		&= \sum_{j=1}^{n} \int_{t_{j-1}}^{t_j}  D^{(2r+1,0)}\kappa(s,t) (\mathcal{I} - \mathcal{Q}_{n,j})x(t) \, dt \notag \\
		&=\sum_{j=1}^{n}\int_{t_{j-1}}^{t_j}
		D^{(2r+1,0)}\kappa(s,t)\Phi_j(t)
		[\tau_j^0,\tau_j^1,\ldots,\tau_j^{2r},t]x\,dt.  
	\end{align*}
	For fixed \(s\), we assume that
	\begin{align*}
		g_j^s(t) = D^{(2r+1,0)}\kappa(s,t) \left[\tau_j^0,\tau_j^1,\ldots, \tau_j^{2r},t\right]x, \quad t \in [t_{j-1}, t_j], ~ \text{for} ~ j=1,2,\ldots,n.
	\end{align*}
	Therefore,
	\begin{equation*} 
		\left(\mathcal{K}(\mathcal{I} - \mathcal{Q}_n)x\right)^{(2r+1)}(s)= \sum_{j=1}^{n ~}\int_{t_{j-1}}^{t_j} g_j^s(t) \Phi_j(t) \, dt
	\end{equation*}	
	It is easy to see that, for $s \in [0,1]$ and $t \in [t_{j-1},t_j]$,
	\begin{equation*}
		\left|(g_j^s)'(t) \right| \le 2 C_2 \norm{x}_{2r +2, \infty}.
	\end{equation*}
	Let $c_j$ be the midpoint of $\Delta_j$. By the mean-value form of Taylor's theorem, for each $t\in\Delta_j$ there exists $\xi_t\in\Delta_j$ such that
	\[
	g_j^s(t) = g_j^s(c_j) + (t-c_j)(g_j^s)'(\xi_t).
	\]
	Using the antisymmetry of $\Phi_j$ about $c_j$, we have $\int_{t_{j-1}}^{t_j}\Phi_j(t)\,dt=0$, and hence
	\[
	\int_{t_{j-1}}^{t_j} g_j^s(t)\,\Phi_j(t)\,dt
	= \int_{t_{j-1}}^{t_j} (t-c_j)(g_j^s)'(\xi_t)\,\Phi_j(t)\,dt.
	\]
	Taking absolute values, we obtain
	\[
	\Bigl|\int_{t_{j-1}}^{t_j} g_j^s(t)\,\Phi_j(t)\,dt\Bigr|
	\le 2C_2\norm{x}_{2r+2,\infty}\int_{t_{j-1}}^{t_j}|t-c_j|\,|\Phi_j(t)|\,dt.
	\]
	On $\Delta_j$ we have $|t-c_j|\le h/2$ and, since each factor $(t-\tau_j^i)=O(h)$ on $\Delta_j$,
	\[
	\int_{t_{j-1}}^{t_j}|\Phi_j(t)|\,dt = O(h^{2r+2}).
	\]
	Therefore each subinterval contributes
	\[
	\Bigl|\int_{t_{j-1}}^{t_j} g_j^s(t)\,\Phi_j(t)\,dt\Bigr| = O(h)\cdot O(h^{2r+2})
	= O(h^{2r+3}),
	\]
	uniformly in $s$ and $j$. Summing over $j=1,\dots,n$ (with $n=1/h$) yields
	\[
	\bigl|(\mathcal{K}(\mathcal{I}-\mathcal{Q}_n)x)^{(2r+1)}(s)\bigr|
	\le n\cdot O(h^{2r+3}) = \frac{1}{h}\,O(h^{2r+3}) = O(h^{2r+2}),
	\]
	uniformly in $s$. Hence there exists $C_3 >0$, independent to $h$, such that
	\[
	\norm{(\mathcal{K}(\mathcal{I}-\mathcal{Q}_n)x)^{(2r+1)}}_\infty
	\le C_3 \norm{x}_{2r+2,\infty}\,h^{2r+2}.
	\]
	Finally, substituting this into \eqref{Eq:09} gives
	\[
	\norm{(\mathcal{I}-\mathcal{Q}_n)\mathcal{K}(\mathcal{I}-\mathcal{Q}_n)x}_\infty
	\le C_1C_3 \norm{x}_{2r+2,\infty}\,h^{4r+3},
	\]
	which completes the proof.
\end{proof}

\begin{proposition} \label{Pr:03}
	If the kernel $\kappa \in \mathcal{C}^{2r+2}(\Omega)$ and 
	$x \in \mathcal{C}^{2r+2}[0,1]$, then
	\[
	\norm{\mathcal{K}(\mathcal{I} - \mathcal{Q}_n)\mathcal{K}(\mathcal{I} - \mathcal{Q}_n)x}_\infty 
	\le 4(C_2)^2  \norm{x}_{2r +2, \infty} h^{4r+4}.
	\]
\end{proposition}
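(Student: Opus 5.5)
The plan is to apply Proposition~\ref{Pr:01} twice, once to the inner factor and once to the outer one. Write $y = \mathcal{K}(\mathcal{I}-\mathcal{Q}_n)x$, so the quantity to estimate is $\mathcal{K}(\mathcal{I}-\mathcal{Q}_n)y$. Since $\kappa \in \mathcal{C}^{2r+2}(\Omega)$, differentiation under the integral sign is legitimate up to order $2r+2$ in $s$, so $y \in \mathcal{C}^{2r+2}[0,1]$. Proposition~\ref{Pr:01} then applies to $y$, because the kernel is in particular continuously differentiable in $t$. It gives
\[
\norm{\mathcal{K}(\mathcal{I}-\mathcal{Q}_n)y}_\infty \le 2C_2 \norm{y}_{2r+2,\infty} h^{2r+2}.
\]

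It remains to show that $\norm{y}_{2r+2,\infty} \le 2C_2\norm{x}_{2r+2,\infty} h^{2r+2}$. For each $0 \le i \le 2r+2$ we have
\[
y^{(i)}(s) = \int_0^1 D^{(i,0)}\kappa(s,t)\,(\mathcal{I}-\mathcal{Q}_n)x(t)\,dt,
\]
so $y^{(i)} = \mathcal{K}_i(\mathcal{I}-\mathcal{Q}_n)x$, where $\mathcal{K}_i$ is the integral operator with kernel $D^{(i,0)}\kappa$. I would rerun the proof of Proposition~\ref{Pr:01} with $\kappa$ replaced by $D^{(i,0)}\kappa$. That argument splits the integral over the subintervals, writes the remainder as $\Phi_j$ times a divided difference, and integrates by parts against $\omega_j$ (which vanishes at both endpoints). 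Nothing in it uses anything beyond a bound on the kernel and on its first $t$-derivative.

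The derivative of the new integrand is
\[
(f_j^s)'(t) = D^{(i,0)}\kappa(s,t)\,[\tau_j^0,\ldots,\tau_j^{2r},t,t]x + D^{(i,1)}\kappa(s,t)\,[\tau_j^0,\ldots,\tau_j^{2r},t]x .
\]
It is bounded by $2C_2\norm{x}_{2r+2,\infty}$ provided $D^{(i,0)}\kappa$ and $D^{(i,1)}\kappa$ are bounded by $C_2$. This holds because $i+1 \le 2r+3$ and $C_2$ covers all mixed derivatives of total order up to $2r+3$. This yields $\norm{y^{(i)}}_\infty \le 2C_2\norm{x}_{2r+2,\infty}h^{2r+2}$ uniformly in $i$. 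Substituting into the first estimate gives $4C_2^2\norm{x}_{2r+2,\infty}h^{4r+4}$.

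The main obstacle is the mismatch in regularity. Using $D^{(2r+2,1)}\kappa$ needs one more derivative than $\kappa \in \mathcal{C}^{2r+2}(\Omega)$ strictly provides, though it is implicit in the finiteness of $C_2$. I would state explicitly that the mixed derivatives up to total order $2r+3$ entering $C_2$ are assumed bounded, as the definition of $C_2$ presupposes. The final constant also depends on the bound $\int|\omega_j|\le h^{2r+2}\cdot h$ per subinterval used in Proposition~\ref{Pr:01}; I would simply inherit that bound rather than re-derive it.
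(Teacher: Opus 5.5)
Your proposal is correct and is essentially the paper's proof. The paper applies Proposition~\ref{Pr:01} to $y=\mathcal{K}(\mathcal{I}-\mathcal{Q}_n)x$, then bounds the derivatives of $y$ by $2C_2\norm{x}_{2r+2,\infty}h^{2r+2}$ by rerunning a superconvergence argument with kernel $D^{(i,0)}\kappa$. It cites the midpoint/antisymmetry argument of Proposition~\ref{Pr:02} and writes it out only for $i=2r+2$, whereas you reuse the $\omega_j$ integration by parts of Proposition~\ref{Pr:01} for every $i$; this difference is immaterial. Your remark that the step needs $D^{(2r+2,1)}\kappa$ is accurate: that is the order-$(2r+3)$ smoothness built into $C_2$, not just $\kappa\in\mathcal{C}^{2r+2}(\Omega)$, and it applies equally to the paper's argument.
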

\vspace*{-0.2in}
\begin{proof}
	Assume that $\kappa \in \mathcal{C}^{2r+2}(\Omega)$ and let
	$x \in \mathcal{C}^{2r+2}[0,1]$. Then
	$
	\mathcal{K}(\mathcal{I}-\mathcal{Q}_n)x \in \mathcal{C}^{2r+2}[0,1].
	$
	
	\noindent
	Using the Proposition \ref{Pr:01}, we get
	\begin{equation}\label{Eq:10}	
		\norm{\mathcal{K}(\mathcal{I} - \mathcal{Q}_n)\mathcal{K}(\mathcal{I} - \mathcal{Q}_n)x}_\infty \le 2C_2 \norm{\mathcal{K}(\mathcal{I} - \mathcal{Q}_n)x}_{2r+2,\infty} h^{2r+2}.  
	\end{equation}
	To estimate the derivative norm $\norm{\mathcal{K}(\mathcal{I}-\mathcal{Q}_n)x}_{2r+2,\infty}$, 
	note that
	\begin{equation*} 
		\norm{\mathcal{K}(\mathcal{I} - \mathcal{Q}_n)x}_{2r+2,\infty} = \max_{0 \leq i \leq 2r + 2} \norm{\left(\mathcal{K}(\mathcal{I} - \mathcal{Q}_n)x\right)^{(2r+2)}}_\infty.
	\end{equation*}
	Since $\kappa \in \mathcal{C}^{2r+2}(\Omega)$, and the argument used in Proposition~\ref{Pr:02} applies. 
	Following the same reasoning, one obtains
	\begin{equation*}
		\left|\left(\mathcal{K}(\mathcal{I} - \mathcal{Q}_n)x\right)^{(2r+2)}(s)\right| \le  2 C_2 \norm{x}_{2r +2, \infty} h^{2r + 2}.
	\end{equation*}
	From estimate \eqref{Eq:10}, we have
	\begin{equation*}	
		\norm{\mathcal{K}(\mathcal{I} - \mathcal{Q}_n)\mathcal{K}(\mathcal{I} - \mathcal{Q}_n)x}_\infty \le 4(C_2)^2  \norm{x}_{2r +2, \infty} h^{4r+4}.
	\end{equation*}
	This completes the proof.
\end{proof}
Propositions~\ref{Pr:01}-\ref{Pr:03} provide the fundamental approximation estimates for the interpolatory projection operator $\mathcal{Q}_n$, which will be instrumental in deriving the convergence rates for the corresponding eigenvalue and eigenfunction approximations.
\begin{lemma} \label{Lem:04}
	Let \(\mathcal{K}\) be the linear integral operator defined by \eqref{Eq:01} with a smooth kernel $\kappa \in\mathcal{C}^{2r+1}(\Omega)$ and continuously differentiable with respect to $t$. Let \(\mathcal{Q}_n\) denote the interpolatory projection operator onto the approximation space \(\mathcal{X}_n\) as defined in \eqref{Eq:04}. Then
	\begin{eqnarray*}
		\norm{(\mathcal{K} - \mathcal{Q}_n \mathcal{K}) \mathcal{K} |_{\mathcal{R}(E)} } &=& O\left(h^{2r + 1}\right), \\
		\norm{ \mathcal{K} (\mathcal{K} - \mathcal{Q}_n \mathcal{K}) \mathcal{K} |_{\mathcal{R}(E)} } &=& O\left(h^{2r + 2}\right).
	\end{eqnarray*}
\end{lemma}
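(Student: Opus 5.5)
Since $\mathcal{R}(E)$ is finite dimensional (indeed one-dimensional, $\lambda$ being simple), both operator norms in Lemma~\ref{Lem:04} reduce to estimates on finitely many fixed functions. For $x \in \mathcal{R}(E)$ with $\norm{x}_\infty = 1$ we will bound the two expressions by quantities depending only on $\mathcal{K}$ and $\norm{x}_\infty$. Equivalence of norms on $\mathcal{R}(E)$ then yields operator-norm bounds with a constant independent of $n$.

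For the first estimate, write $(\mathcal{K}-\mathcal{Q}_n\mathcal{K})\mathcal{K}x = (\mathcal{I}-\mathcal{Q}_n)y$ with $y=\mathcal{K}^2x$. Applying \eqref{Eq:06} to $y=\mathcal{K}(\mathcal{K}x)$ gives $\norm{y^{(2r+1)}}_\infty \le C_2\norm{\mathcal{K}x}_\infty \le C_2\norm{\mathcal{K}}\,\norm{x}_\infty$. Since $\kappa\in\mathcal{C}^{2r+1}(\Omega)$, differentiation under the integral is legitimate, so $y \in \mathcal{C}^{2r+1}[0,1]$. The interpolation estimate \eqref{Eq:05} with $\beta=2r+1$ then gives
\[
\norm{(\mathcal{I}-\mathcal{Q}_n)\mathcal{K}^2x}_\infty \le C_1C_2\norm{\mathcal{K}}\,\norm{x}_\infty\,h^{2r+1},
\]
which is the first claim.

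For the second estimate, write $\mathcal{K}(\mathcal{K}-\mathcal{Q}_n\mathcal{K})\mathcal{K}x = \mathcal{K}(\mathcal{I}-\mathcal{Q}_n)y$ with the same $y$, and invoke Proposition~\ref{Pr:01}. That proposition requires $y\in\mathcal{C}^{2r+2}[0,1]$. We obtain this from the standing assumption $\mathcal{R}(\mathcal{K})\subset\mathcal{C}^{2r+2}[0,1]$ together with \eqref{Eq:06} for $j\le 2r+2$, which gives $\norm{y}_{2r+2,\infty}\le C_2\norm{\mathcal{K}}\,\norm{x}_\infty$. Proposition~\ref{Pr:01} then yields
\[
\norm{\mathcal{K}(\mathcal{I}-\mathcal{Q}_n)y}_\infty \le 2C_2^2\norm{\mathcal{K}}\,\norm{x}_\infty\,h^{2r+2}.
\]
Taking the supremum over unit vectors $x$ in $\mathcal{R}(E)$ gives both $O$-bounds.

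The main obstacle is regularity bookkeeping. The stated hypothesis $\kappa\in\mathcal{C}^{2r+1}(\Omega)$ alone only gives $y\in\mathcal{C}^{2r+1}$, while Proposition~\ref{Pr:01} asks for $2r+2$ derivatives. I would resolve this by appealing to the section's global assumption ($\kappa\in\mathcal{C}^{2r+2}$ and $\mathcal{R}(\mathcal{K})\subset\mathcal{C}^{2r+2}$). Alternatively, I would observe that the proof of Proposition~\ref{Pr:01} only uses the $(2r+2)$-th divided difference of $y$, which, since $y=\mathcal{K}(\mathcal{K}x)$, is bounded through the $s$-derivatives of $\kappa$ up to order $2r+2$.
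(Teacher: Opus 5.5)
Your proof is correct and takes the same route as the paper. You apply \eqref{Eq:05} to $\mathcal{K}^2\psi$ for the first bound and Proposition~\ref{Pr:01} to $\mathcal{K}^2\psi$ for the second, and you rightly take the $(2r+2)$-fold smoothness of $\mathcal{K}^2\psi$ from the section's standing assumptions rather than from the lemma's stated hypothesis $\kappa\in\mathcal{C}^{2r+1}(\Omega)$, which the paper's proof passes over with an unjustified ``consequently''. Your exponents $h^{2r+1}$ and $h^{2r+2}$ are what these two tools actually give and match the statement, whereas the paper's written proof records $h^{2r+2}$ and $h^{4r+4}$ at these steps, which \eqref{Eq:05} and Proposition~\ref{Pr:01} do not support and which the numerical tables for $r=0$ contradict.
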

\vspace*{-0.2in}
\begin{proof}
	By definition of the operator norm,
	\[
	\norm{(\mathcal{K} - \mathcal{Q}_n \mathcal{K}) \mathcal{K} 
		\big|_{\mathcal{R}(E)}}
	=
	\sup_{\psi \in \mathcal{R}(E)}
	\Big\{
	\norm{(\mathcal{K} - \mathcal{Q}_n \mathcal{K}) \mathcal{K} \psi }_\infty
	:\, \norm{\psi}_\infty = 1
	\Big\}.
	\]	
	Let $\psi \in \mathcal{R}(E)$ and set $y = \mathcal{K}\psi$. 
	Since $\kappa \in \mathcal{C}^{2r+1}(\Omega)$, differentiation under 
	the integral sign is valid up to order $2r+1$. Hence,
	$\mathcal{K} : \mathcal{C}[0,1] \to \mathcal{C}^{2r+1}[0,1]$, 
	so that $y \in \mathcal{C}^{2r+1}[0,1]$ and consequently 
	$\mathcal{K}y \in \mathcal{C}^{2r+2}[0,1]$.
	Then using estimate~\eqref{Eq:05},
	\[
	\norm{(\mathcal{I} - \mathcal{Q}_n)\mathcal{K}y}_\infty
	\le
	C_1 \norm{(\mathcal{K}y)^{(2r+1)}}_\infty\, h^{2r+2}.
	\]
	From inequality~\eqref{Eq:06},
	\[
	\norm{(\mathcal{K}y)^{(2r+1)}}_\infty
	\le
	(C_2)^2 \norm{\psi}_\infty.
	\]
	Hence,
	\[
	\norm{(\mathcal{K} - \mathcal{Q}_n \mathcal{K}) \mathcal{K} \psi }_\infty
	\le
	(C_2)^2\, \norm{\psi}_\infty\, h^{2r+2},
	\]
	and therefore
	\[
	\norm{(\mathcal{K} - \mathcal{Q}_n \mathcal{K}) \mathcal{K} 
		\big|_{\mathcal{R}(E)}}
	=
	O\left(h^{2r+2}\right).
	\]
	
	Next, we estimate
	\[
	\norm{\mathcal{K}(\mathcal{K} - \mathcal{Q}_n \mathcal{K}) 
		\mathcal{K} \big|_{\mathcal{R}(E)}}
	=
	\sup_{\psi \in \mathcal{R}(E)}
	\Big\{
	\norm{\mathcal{K}(\mathcal{I} - \mathcal{Q}_n)\mathcal{K}y}_\infty
	:\, \norm{\psi}_\infty = 1
	\Big\}.
	\]
	Applying Proposition~\ref{Pr:01},
	\[
	\norm{\mathcal{K}(\mathcal{I} - \mathcal{Q}_n)\mathcal{K}y}_\infty
	\le
	2C_2 \norm{\mathcal{K}y}_{2r+2,\infty}\, h^{4r+4}.
	\]
	Note that 
	\begin{equation*}
		\norm{\mathcal{K}y}_{2r+2, \infty} = \max_{0 \leq j \leq2r +2} \norm{ \left(\mathcal{K}y\right)^{(j)}}_\infty.
	\end{equation*}
	From inequality \eqref{Eq:06}, it follows that the bound $\norm{(\mathcal{K}y)^{(j)}}_\infty$ is independent of $h$.
	Thus,
	\[
	\norm{\mathcal{K}(\mathcal{K} - \mathcal{Q}_n \mathcal{K}) 
		\mathcal{K} \big|_{\mathcal{R}(E)}}
	=
	O\left(h^{4r+4}\right).
	\]
	This completes the proof.
\end{proof} 

\begin{lemma} \label{Lem:05}
	Let \(\mathcal{K}\) be the linear integral operator defined in \eqref{Eq:01} with a smooth kernel $\kappa \in \mathcal{C}^{2r+2}[0,1]$. Let \(\mathcal{Q}_n\) denote the interpolatory projection operator onto the approximation space \(\mathcal{X}_n\), as given by \eqref{Eq:04}. Then 
	\begin{eqnarray*}
		\norm{(\mathcal{K} - \mathcal{K}_n^M) \mathcal{K} |_{\mathcal{R}(E)}} 
		&=& O\left(h^{4r+3}\right), \\
		\norm{\mathcal{K} (\mathcal{K} - \mathcal{K}_n^M) \mathcal{K} |_{\mathcal{R}(E)}} 
		&=& O\left(h^{4r+4}\right),
	\end{eqnarray*}
	where \(\mathcal{K}_n^M = \mathcal{Q}_n \mathcal{K} + \mathcal{K} \mathcal{Q}_n - \mathcal{Q}_n \mathcal{K} \mathcal{Q}_n\).
\end{lemma}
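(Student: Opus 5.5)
The plan is to reduce both estimates to Propositions~\ref{Pr:02} and~\ref{Pr:03} using the identity
\[
\mathcal{K}-\mathcal{K}_n^M=(\mathcal{I}-\mathcal{Q}_n)\mathcal{K}(\mathcal{I}-\mathcal{Q}_n),
\]
which was recorded when $\mathcal{K}_n^M$ was introduced. Fix $\psi\in\mathcal{R}(E)$ with $\norm{\psi}_\infty=1$ and set $y=\mathcal{K}\psi$. Then
\[
(\mathcal{K}-\mathcal{K}_n^M)\mathcal{K}\psi=(\mathcal{I}-\mathcal{Q}_n)\mathcal{K}(\mathcal{I}-\mathcal{Q}_n)y,\qquad
\mathcal{K}(\mathcal{K}-\mathcal{K}_n^M)\mathcal{K}\psi=\mathcal{K}(\mathcal{I}-\mathcal{Q}_n)\mathcal{K}(\mathcal{I}-\mathcal{Q}_n)y.
\]
The operator norms restricted to $\mathcal{R}(E)$ are the suprema of these quantities over such $\psi$. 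So it suffices to bound the two expressions on the right, uniformly in $\psi$, by $O(h^{4r+3})$ and $O(h^{4r+4})$ respectively.

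\textbf{Step 1: regularity of $y$.} Since $\kappa\in\mathcal{C}^{2r+2}(\Omega)$, differentiation under the integral sign is allowed up to order $2r+2$. Hence $y\in\mathcal{C}^{2r+2}[0,1]$, and by \eqref{Eq:06},
\[
\norm{y}_{2r+2,\infty}\le C_2\norm{\psi}_\infty=C_2.
\]
This bound is independent of $h$ and of $\psi$. This is the only input needed to apply the propositions with $x=y$. One could also use the standing assumption $\mathcal{R}(\mathcal{K})\subset\mathcal{C}^{2r+2}[0,1]$.

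\textbf{Step 2: the two estimates.} Applying Proposition~\ref{Pr:02} with $x=y$ gives
\[
\norm{(\mathcal{I}-\mathcal{Q}_n)\mathcal{K}(\mathcal{I}-\mathcal{Q}_n)y}_\infty\le C_1C_3C_2\,h^{4r+3}.
\]
Applying Proposition~\ref{Pr:03} with $x=y$ gives
\[
\norm{\mathcal{K}(\mathcal{I}-\mathcal{Q}_n)\mathcal{K}(\mathcal{I}-\mathcal{Q}_n)y}_\infty\le 4C_2^3\,h^{4r+4}.
\]
Taking the supremum over unit $\psi\in\mathcal{R}(E)$ yields the two claimed rates. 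If one prefers to avoid the hypothesis-sensitive Proposition~\ref{Pr:03}, there is a second route for the $h^{4r+4}$ bound. Write $\mathcal{K}(\mathcal{I}-\mathcal{Q}_n)z$ with $z=\mathcal{K}(\mathcal{I}-\mathcal{Q}_n)y$, and apply Proposition~\ref{Pr:01} to $z$. This reduces matters to showing $\norm{z}_{2r+2,\infty}=O(h^{2r+2})$. That in turn follows from the argument of Proposition~\ref{Pr:01} applied to each $s$-derivative kernel $D^{(i,0)}\kappa$, $0\le i\le 2r+2$, in place of $\kappa$.

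\textbf{Main obstacle.} The substantive point is the derivative bound $\norm{(\mathcal{K}(\mathcal{I}-\mathcal{Q}_n)y)^{(i)}}_\infty=O(h^{2r+2})$ for all $i\le 2r+2$, uniformly in $s$. This rests on the integration-by-parts (or antisymmetry) argument with $\omega_j$, and it requires that $D^{(i,1)}\kappa$ be bounded for $i\le 2r+2$, i.e.\ mixed derivatives of total order up to $2r+3$. That requirement is exactly why $C_2$ was defined with total order $2r+3$. I would make this smoothness requirement explicit, interpreting the hypothesis $\kappa\in\mathcal{C}^{2r+2}$ as in the preceding propositions. The remaining work is only bookkeeping of constants.
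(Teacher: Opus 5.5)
Your proposal is correct and is essentially the paper's own proof. Both use $\mathcal{K}-\mathcal{K}_n^M=(\mathcal{I}-\mathcal{Q}_n)\mathcal{K}(\mathcal{I}-\mathcal{Q}_n)$, apply Propositions~\ref{Pr:02} and~\ref{Pr:03} to $y=\mathcal{K}\psi$ with $\norm{y}_{2r+2,\infty}\le C_2\norm{\psi}_\infty$, and reach the same constants $C_1C_2C_3$ and $4C_2^3$; your alternative route to the $h^{4r+4}$ bound is just the internal argument of Proposition~\ref{Pr:03}, and your remark that mixed derivatives up to total order $2r+3$ are needed (despite the stated hypothesis $\kappa\in\mathcal{C}^{2r+2}$) matches how the paper defines $C_2$.
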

\vspace*{-0.2in}
\begin{proof}
	Let $\psi \in \mathcal{R}(E)$. Since $\mathcal{R}(E)$ is invariant under 
	$\mathcal{K}$, it follows that $\mathcal{K}\psi \in \mathcal{R}(E)$. 
	By definition of the operator norm restricted to $\mathcal{R}(E)$,
	\[
	\norm{(\mathcal{K} - \mathcal{K}_n^M)\mathcal{K} 
		\big|_{\mathcal{R}(E)}}
	=
	\sup_{\psi \in \mathcal{R}(E)}
	\Big\{
	\norm{(\mathcal{I}-\mathcal{Q}_n)\mathcal{K}
		(\mathcal{I}-\mathcal{Q}_n)\mathcal{K}\psi}_\infty
	:\, \norm{\psi}_\infty = 1
	\Big\}.
	\]
	Assume $\psi \in \mathcal{C}^{2r+2}[0,1]$ and  $\kappa \in \mathcal{C}^{2r+2}(\Omega)$. Then $\mathcal{K}\psi \in \mathcal{C}^{2r+2}[0,1]$. 
	Applying Proposition~\ref{Pr:02}, we obtain
	\[
	\norm{(\mathcal{I}-\mathcal{Q}_n)\mathcal{K}
		(\mathcal{I}-\mathcal{Q}_n)\mathcal{K}\psi}_\infty
	\le
	C_1 C_3\norm{\mathcal{K}\psi}_{2r+2,\infty} \, h^{4r+3}.
	\]
	Moreover,
	\[
	\norm{\mathcal{K}\psi}_{2r+2,\infty}
	=
	\max_{0 \le j \le 2r+2}
	\norm{(\mathcal{K}\psi)^{(j)}}_\infty
	\le
	C_2 \norm{\psi}_\infty,
	\]
	so that
	\[
	\norm{(\mathcal{I}-\mathcal{Q}_n)\mathcal{K}
		(\mathcal{I}-\mathcal{Q}_n)\mathcal{K}\psi}_\infty
	\le
	C_1 C_2 C_3\norm{\psi}_\infty \, h^{4r+3}.
	\]
	Taking $\norm{\psi}_\infty = 1$, we conclude
	\[
	\norm{(\mathcal{K} - \mathcal{K}_n^M)\mathcal{K} 
		\big|_{\mathcal{R}(E)}}
	=
	O\left(h^{4r+3}\right).
	\]
	
	Next, consider
	\[
	\norm{\mathcal{K}(\mathcal{K} - \mathcal{K}_n^M)\mathcal{K}
		\big|_{\mathcal{R}(E)}}
	=
	\sup_{\psi \in \mathcal{R}(E)}
	\Big\{
	\norm{\mathcal{K}(\mathcal{I}-\mathcal{Q}_n)\mathcal{K}
		(\mathcal{I}-\mathcal{Q}_n)\mathcal{K}\psi}_\infty
	:\, \norm{\psi}_\infty = 1\Big\}.
	\]
	Since $\mathcal{K}\psi \in \mathcal{C}^{2r+2}[0,1]$, Proposition~\ref{Pr:03} yields
	\[
	\norm{\mathcal{K}(\mathcal{I}-\mathcal{Q}_n)\mathcal{K}
		(\mathcal{I}-\mathcal{Q}_n)\mathcal{K}\psi}_\infty
	\le
	4(C_2)^2 \norm{\mathcal{K}\psi}_{2r+2,\infty} \, h^{4r+4}.
	\]
	Using again the bound 
	$\norm{\mathcal{K}\psi}_{2r+2,\infty} \le C_2 \norm{\psi}_\infty$, 
	we obtain
	\[
	\norm{\mathcal{K}(\mathcal{K} - \mathcal{K}_n^M)\mathcal{K}
		\big|_{\mathcal{R}(E)}}
	\le
	4(C_2)^3 \norm{\psi}_\infty \, h^{4r+4}.
	\]
	Hence, for $\norm{\psi}_\infty = 1$,
	\[
	\norm{\mathcal{K}(\mathcal{K} - \mathcal{K}_n^M)\mathcal{K}
		\big|_{\mathcal{R}(E)}}
	=
	O\left(h^{4r+4}\right).
	\]
	This completes the proof.
\end{proof}

By combining Lemma~\ref{Lem:04} with Theorem~\ref{Thm:01},  we obtain the following convergence rates for the approximation of eigenvalues and spectral subspaces by the collocation and iterated collocation methods.
\begin{theorem} \label{thm1}
	Let \(\psi_n^C \in \mathcal{R}(E_n)\) denote a collocation eigenfunction, and let \(\psi_n^S\) be the corresponding iterated collocation eigenfunction. Suppose \(\lambda_n^C\) is the approximate simple eigenvalue of \(\mathcal{Q}_n\mathcal{K}\). Then, for sufficiently large \(n\), the following error bounds hold:
	\[	
	\norm{\psi_n^C - E\psi_n^C  }_{\infty} = O\left(h^{2r +1}\right), \quad 	\norm{\psi_n^S - E\psi_n^S  }_{\infty} = O\left(h^{2r +2}\right),
	\]
	and
	\[	
	|\lambda - \lambda_n^C| = O\left(h^{2r +2}\right).
	\]
\end{theorem}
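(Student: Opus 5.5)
The plan is to combine Theorem~\ref{Thm:01} with the operator estimates of Lemma~\ref{Lem:04}. Because $\lambda$ is simple, the arithmetic mean $\hat{\lambda}_n$ is the single eigenvalue $\lambda_n^C$ of $\mathcal{Q}_n\mathcal{K}$ inside $\Gamma$. The three quantities in the statement are then controlled, for all sufficiently large $n$, by $\norm{(\mathcal{K}-\mathcal{Q}_n\mathcal{K})\mathcal{K}|_{\mathcal{R}(E)}}$ and $\norm{\mathcal{K}(\mathcal{K}-\mathcal{Q}_n\mathcal{K})\mathcal{K}|_{\mathcal{R}(E)}}$. So the work reduces to checking that the hypotheses of Theorem~\ref{Thm:01} hold and then substituting the rates.

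First I verify that $\mathcal{K}_n=\mathcal{Q}_n\mathcal{K}$ converges to $\mathcal{K}$ in the collectively compact sense. Pointwise convergence follows from $\mathcal{Q}_n x\to x$ for every $x\in\mathcal{C}[0,1]$. Collective compactness follows from the uniform boundedness of $\mathcal{Q}_n$ (a Lebesgue-constant bound for equidistant interpolation of fixed degree $2r$) together with the compactness of $\mathcal{K}$. In fact \eqref{Eq:05} and \eqref{Eq:06} give $\norm{\mathcal{K}-\mathcal{Q}_n\mathcal{K}}\le C_1C_2h$, so the convergence is even in norm. This guarantees that $\Gamma\subset\rho(\mathcal{K}_n)$ for large $n$ and that $\mathcal{R}(E_n)$ is one-dimensional.

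Next, for $\norm{\psi_n^C-E\psi_n^C}_\infty$ I use the first inequality of Theorem~\ref{Thm:01} together with the first estimate of Lemma~\ref{Lem:04}. Concretely, for $\psi\in\mathcal{R}(E)$ with $\norm{\psi}_\infty=1$, the function $\mathcal{K}^2\psi$ lies in $\mathcal{C}^{2r+1}$ with derivatives bounded by $(C_2)^2$, so \eqref{Eq:05} gives $O(h^{2r+1})$. This matches the rate claimed in Lemma~\ref{Lem:04}, and the rate stated in the theorem, regardless of the sharper intermediate bound in its proof.

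For the iterate $\psi_n^S$ and the eigenvalue $\lambda_n^C$, I use the second and third inequalities of Theorem~\ref{Thm:01} with the second estimate of Lemma~\ref{Lem:04}. The key is to write
\[
\mathcal{K}(\mathcal{I}-\mathcal{Q}_n)\mathcal{K}^2\psi
\]
and apply Proposition~\ref{Pr:01} with $x=\mathcal{K}^2\psi\in\mathcal{C}^{2r+2}[0,1]$, using $\mathcal{R}(\mathcal{K})\subset\mathcal{C}^{2r+2}$ and \eqref{Eq:06}. This yields a bound of order $h^{2r+2}$ with a constant depending only on $C_2$, which gives the claimed $O(h^{2r+2})$ rates. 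The normalisation $\psi_n^S=\mathcal{K}\psi_n^C/\lambda_n^C$ differs from the $\mathcal{K}\psi_n^C$ used in Theorem~\ref{Thm:01} only by the bounded factor $1/\lambda_n^C\to1/\lambda\neq0$, and $E$ is linear, so the estimate transfers.

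The main obstacle is the smoothness bookkeeping in the second estimate. Proposition~\ref{Pr:01} needs $x\in\mathcal{C}^{2r+2}$ and a kernel differentiable in $t$, while Lemma~\ref{Lem:04} assumes only $\kappa\in\mathcal{C}^{2r+1}$. I would resolve this by invoking the standing assumption of this section, $\kappa\in\mathcal{C}^{2r+2}(\Omega)$ with $\mathcal{R}(\mathcal{K})\subset\mathcal{C}^{2r+2}[0,1]$, so that $\norm{\mathcal{K}^2\psi}_{2r+2,\infty}\le C\norm{\psi}_\infty$ uniformly on the unit ball of $\mathcal{R}(E)$. This uniformity is what the supremum over $\mathcal{R}(E)$ requires.
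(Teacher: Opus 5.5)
Your proposal is correct and follows the paper's route. The paper obtains Theorem~\ref{thm1} simply by combining Lemma~\ref{Lem:04} with Theorem~\ref{Thm:01}, and your derivation of the two operator rates is the argument in the proof of Lemma~\ref{Lem:04}: \eqref{Eq:05} with $\beta=2r+1$ applied to $\mathcal{K}^2\psi$, and Proposition~\ref{Pr:01} with $x=\mathcal{K}^2\psi$. Your extra care does not change the approach but repairs slips in the paper's write-up: you use $h^{2r+1}$ and $h^{2r+2}$ where that proof writes $h^{2r+2}$ and $h^{4r+4}$, you invoke the standing hypothesis $\kappa\in\mathcal{C}^{2r+2}(\Omega)$ to get $\norm{\mathcal{K}^2\psi}_{2r+2,\infty}\le (C_2)^2\norm{\psi}_\infty$, and you account for the factor $1/\lambda_n^C$.
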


Similarly, by combining Lemma~\ref{Lem:05} with Theorem~\ref{Thm:02}, we derive the following convergence estimates for the modified collocation and iterated modified collocation methods.
\begin{theorem} \label{thm2}
	Let \(\psi_n^M \in \mathcal{R}(E_n^M)\) be a modified collocation eigenfunction, and let \(\widetilde{\psi}_n^M\) denote the corresponding iterated modified collocation eigenfunction. Suppose \(\lambda_n^M\) is the approximate simple eigenvalue of \(\mathcal{K}_n^M\) obtained from the modified method. Then, for all sufficiently large \(n\), the following convergence orders are satisfied:
	\[	
	\norm{\psi_n^M - E\psi_n^M }_{\infty}  = O\left(h^{4r+3}\right), \quad 	\norm{\widetilde{\psi}_n^M - E\widetilde{\psi}_n^M }_{\infty} = O\left(h^{4r+4}\right),
	\]
	and
	\[
	|\lambda - \lambda_n^M| = O\left(h^{4r+4}\right).
	\]
\end{theorem}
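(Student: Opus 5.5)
The proof is a direct substitution of the operator estimates of Lemma~\ref{Lem:05} into the abstract bounds of Theorem~\ref{Thm:02}, specialised to a simple eigenvalue ($m=1$), so that $\hat{\lambda}_n^M=\lambda_n^M$. First I check that Theorem~\ref{Thm:02} applies. Since $\norm{\mathcal{K}-\mathcal{K}_n^M}=\norm{(\mathcal{I}-\mathcal{Q}_n)\mathcal{K}(\mathcal{I}-\mathcal{Q}_n)}\to 0$, the operators $\mathcal{K}_n^M$ converge to $\mathcal{K}$ in norm. Norm convergence of finite-rank operators to a compact operator implies collective compactness of $\{\mathcal{K}_n^M\}$ for large $n$. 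Hence the contour $\Gamma$ lies in $\rho(\mathcal{K}_n^M)$, $E_n^M$ has rank one, and exactly one eigenvalue $\lambda_n^M$ lies inside $\Gamma$.

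Next I verify the hypotheses of Lemma~\ref{Lem:05} for the eigenfunction. Let $\psi\in\mathcal{R}(E)$ with $\norm{\psi}_\infty=1$. Then $\psi=\lambda^{-1}\mathcal{K}\psi$, so by \eqref{Eq:06} and $\kappa\in\mathcal{C}^{2r+2}(\Omega)$ we get $\psi\in\mathcal{C}^{2r+2}[0,1]$ and $\norm{\psi}_{2r+2,\infty}\le C_2|\lambda|^{-1}$. The same holds for $\mathcal{K}\psi$. This fills in the assumption ``$\psi\in\mathcal{C}^{2r+2}$'' made inside the proof of Lemma~\ref{Lem:05}. Combining it with Propositions~\ref{Pr:02} and \ref{Pr:03} gives
\[
\norm{(\mathcal{K}-\mathcal{K}_n^M)\mathcal{K}|_{\mathcal{R}(E)}}=O(h^{4r+3}),\qquad \norm{\mathcal{K}(\mathcal{K}-\mathcal{K}_n^M)\mathcal{K}|_{\mathcal{R}(E)}}=O(h^{4r+4}),
\]
with constants depending only on $C_1,C_2,C_3$ and $\lambda$.

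Inserting these into the three inequalities of Theorem~\ref{Thm:02} yields the three claims:
\[
\norm{\psi_n^M-E\psi_n^M}_\infty\le C h^{4r+3},\qquad \norm{\widetilde{\psi}_n^M-E\widetilde{\psi}_n^M}_\infty\le Ch^{4r+4},\qquad |\lambda-\lambda_n^M|\le Ch^{4r+4}.
\]

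One normalisation issue remains. Theorem~\ref{Thm:02} is stated for $\widetilde{\psi}_n^M=\mathcal{K}\psi_n^M$, while the method uses $\widetilde{\psi}_n^M=\frac{1}{\lambda_n^M}\mathcal{K}\psi_n^M$. Both $E$ and the error functional are linear, so the factor $1/\lambda_n^M$ just scales the error. Since $\lambda_n^M\to\lambda\neq0$, this factor is bounded for large $n$, and the rate is unchanged.

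The main obstacle is the second, iterated-eigenfunction estimate. The bound $\delta(\mathcal{R}(E),\mathcal{K}\mathcal{R}(E_n^M))$ is controlled by the three-factor quantity $\norm{\mathcal{K}(\mathcal{K}-\mathcal{K}_n^M)\mathcal{K}|_{\mathcal{R}(E)}}$, and this must be taken from Theorem~\ref{Thm:02} (following Bouda et al.) rather than from the two-factor gap bound. The $O(h^{4r+4})$ rate therefore rests entirely on Proposition~\ref{Pr:03}. Its derivative estimate for $(\mathcal{K}(\mathcal{I}-\mathcal{Q}_n)x)^{(2r+2)}$ uses $\int_{\Delta_j}\Phi_j=0$, which comes from the antisymmetry of $\Phi_j$ under the equidistant odd-count node choice. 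I would double-check that this symmetry argument is the step that genuinely delivers the extra power of $h$, and that it does not require more than $\kappa\in\mathcal{C}^{2r+2}$.
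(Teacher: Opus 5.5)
Your proposal is correct and follows the paper's argument: Theorem~\ref{thm2} is obtained by inserting the two bounds of Lemma~\ref{Lem:05} into Theorem~\ref{Thm:02} with $m=1$. Your extra checks (regularity of $\psi\in\mathcal{R}(E)$ via $\psi=\lambda^{-1}\mathcal{K}\psi$, collective compactness from norm convergence, and the bounded factor $1/\lambda_n^M$) only make explicit what the paper leaves implicit, and your closing suspicion is justified but not fatal: bounding $\bigl(\mathcal{K}(\mathcal{I}-\mathcal{Q}_n)x\bigr)^{(2r+2)}$ in Proposition~\ref{Pr:03} needs $D^{(2r+2,1)}\kappa$, i.e.\ kernel derivatives of total order $2r+3$, which the paper tacitly assumes through its definition of $C_2$ rather than through the stated hypothesis $\kappa\in\mathcal{C}^{2r+2}(\Omega)$.
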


\noindent
The above results show that the standard collocation method produces eigenfunction and eigenvalue approximations of orders $O(h^{2r+1})$ and $O(h^{2r+2})$, respectively, while the iterated collocation method improves the eigenfunction accuracy by one additional order. In contrast, the modified collocation method significantly enhances the convergence behavior, yielding eigenvalue approximations of order $O(h^{4r+4})$, with the iterated modified method attaining the highest accuracy. These findings confirm that modified collocation  and iteration are highly effective strategies for achieving high-accuracy approximations of eigenelements, even when simple equidistant collocation points are employed on each subinterval of a uniform partition over an even-degree piecewise polynomial approximation space.


\section{Numerical Results}

In this section, we present two numerical experiments to illustrate the theoretical convergence results established in the previous section. In both examples, we approximate the largest eigenvalue and associated eigenfunction using the classical collocation and the modified collocation methods, together with their iterated versions.

We consider the integral operator $\mathcal{K}$ with smooth kernel $\kappa$ as
\[
(\mathcal{K}x)(s) = \int_{0}^{1} \kappa(s,t)\, x(t) \, dt, \quad s \in [0, 1],~ x \in \mathcal{C}[0,1].
\]
For the discretization, we consider the space $\mathcal{X}_n$ of piecewise constant functions defined on the uniform partition
\[
0 = t_0 < t_1 < \cdots < t_n = 1,
\quad
t_j = \frac{j}{n}, \quad j=0,1,\dots,n.
\]
The interpolatory projection $\mathcal{Q}_n : \mathcal{C}[0,1] \to \mathcal{X}_n$
is defined by \[
\mathcal{Q}_nx(\tau_j) = x(\tau_j), \quad j = 1, 2, \ldots, n,
\]
where the collocation nodes are chosen as the midpoints
\[
\tau_j = \frac{2j - 1}{2n}, \quad j = 1, 2, \ldots, n.
\]

We approximate the eigenvalue of $\mathcal{K}$ with largest modulus 
and its corresponding eigenfunction as follows: 

\noindent
In all computations, the observed rate of convergence is estimated from two consecutive mesh refinements according to
\[
\delta = \frac{\log(e_n/e_{2n})}{\log 2},
\]
where $e_n$ represents the error obtained with $n$ uniform subintervals.

\vspace*{0.1in}
\noindent
\textbf{Example 1:  $\kappa(s,t)=e^{st}, ~ s,t \in[0,1]$}

\noindent
We first consider the integral operator
\[
(\mathcal{K}x)(s)
=
\int_0^1 e^{st} x(t)\,dt,
\quad s\in[0,1].
\]
The kernel $\kappa(s,t)=e^{st}$ is smooth on $\Omega$. It is known that all eigenvalues of $\mathcal{K}$ are simple, and the largest eigenvalue is $\lambda = 1.3530301647457353.$

\begin{table}[h!]
	\small
	\centering
	\setlength{\tabcolsep}{10pt}
	\renewcommand{\arraystretch}{1.2}
	\caption{Eigenvalues approximation errors for Example~$1$  in variants of Collocation methods}
	\label{table:01}
	\begin{tabular}{|c|c|c|c|c|}
		\hline
		$n$ & $|\lambda - \lambda_n^C|$ & $\delta_{C}$ & $|\lambda - \lambda_n^M|$ & $\delta_{MC}$  \\[3pt] 
		\hline
		2   & $2.08 \times 10^{-2}$ &            & $6.25 \times 10^{-4}$ &          \\[3pt] 
		4   & $5.47 \times 10^{-3}$ & $1.93$     & $4.20 \times 10^{-5}$ & $3.90$     \\[3pt] 
		8   & $1.39 \times 10^{-3}$ & $1.98$     & $2.67 \times 10^{-6}$ & $3.97$     \\[3pt] 
		16  & $3.48 \times 10^{-4}$ & $2.00$     & $1.68 \times 10^{-7}$ & $3.99$     \\[3pt] 
		32  & $8.70 \times 10^{-5}$ & $2.00$     & $1.05 \times 10^{-8}$ & $4.00$     \\[3pt] 
		64  & $2.18 \times 10^{-5}$ & $2.00$     & $6.56 \times 10^{-10}$ & $4.00$     \\[3pt]
		128  & $5.44 \times 10^{-6}$ & $2.00$     & $4.10 \times 10^{-11}$ & $4.00$     \\[3pt] 
		\hline
	\end{tabular}
\end{table}

\begin{table}[h!]
	\small
	\centering
	\setlength{\tabcolsep}{2.3pt}
	\renewcommand{\arraystretch}{1.2}
	\caption{Eigenfunction errors in variants of Collocation methods for Example~$1$}
	\label{table:02}
	\begin{tabular}{|c|cc|cc|cc|cc|}
		\hline
		$n$ & $\norm{\psi_n^C - E \psi_n^C }_\infty$ & $\delta_C$ 
		& $\norm{\psi_n^S - E \psi_n^S }_\infty$ & $\delta_{IC}$ 
		& $\norm{\psi_n^M - E \psi_n^M }_\infty$ & $\delta_{MC}$ 
		& $\norm{\widetilde{\psi}_n^M - E \widetilde{\psi}_n^M }_\infty$ & $\delta_{IMC}$ \\[3.5pt]
		\hline
		2   & $2.62\times10^{-1}$ &      & $7.34\times10^{-3}$ &      & $7.51\times10^{-3}$ &      & $2.33\times10^{-4}$ &      \\[3pt] 
		4   & $1.43\times10^{-1}$ & 0.87 & $1.87\times10^{-3}$ & 1.97 & $1.18\times10^{-3}$ & 2.67 & $1.56\times10^{-5}$ & 3.90 \\[3pt] 
		8   & $7.49\times10^{-2}$ & 0.93 & $4.71\times10^{-4}$ & 1.99 & $1.64\times10^{-4}$ & 2.84 & $9.93\times10^{-7}$ & 3.97 \\[3pt] 
		16  & $3.83\times10^{-2}$ & 0.97 & $1.18\times10^{-4}$ & 2.00 & $2.16\times10^{-5}$ & 2.92 & $6.24\times10^{-8}$ & 3.99 \\[3pt] 
		32  & $1.92\times10^{-2}$ & 0.99 & $2.95\times10^{-5}$ & 2.00 & $2.75\times10^{-6}$ & 2.97 & $3.90\times10^{-9}$ & 4.00 \\[3pt] 
		64  & $9.67\times10^{-3}$ & 0.99 & $7.38\times10^{-6}$ & 2.00 & $3.49\times10^{-7}$ & 2.98 & $2.43\times10^{-10}$ & 4.00 \\[3pt] 
		128 & $4.78\times10^{-3}$ & 1.02 & $1.84\times10^{-6}$ & 2.00 & $4.33\times10^{-8}$ & 3.01 & $1.48\times10^{-11}$ & 4.04 \\[3pt] 
		\hline
	\end{tabular}
\end{table}

\bigskip

\medskip
\noindent
\textbf{Example 2:  $\kappa(s,t)=\cos(\pi s)\cos(\pi t), ~ s,t \in[0,1]$}

\noindent
As a second test problem, we consider the integral operator
\[
(\mathcal{K}x)(s)
=
\int_0^1 \cos(\pi s)\cos(\pi t) x(t)\,dt,
\quad s\in[0,1].
\]
with separable kernel $\kappa(s,t)=\cos(\pi s)\cos(\pi t),~s,t\in[0,1],$ smooth on $\Omega$. In this case, the integral operator is of rank one and admits the unique nonzero simple eigenvalue as $\lambda = \frac{1}{2}$ and associated eigenfunction as $\psi(s) = \cos(\pi s).$ All other eigenvalues are equal to zero.

\begin{table}[h!]
	\small
	\centering
	\setlength{\tabcolsep}{10pt}
	\renewcommand{\arraystretch}{1.2}
	\caption{Eigenvalues approximation errors for Example~$2$  in variants of Collocation methods}
	\label{table:03}
	\begin{tabular}{|c|c|c|c|c|}
		\hline
		$n$ & $|\lambda - \lambda_n^C|$ & $\delta_{C}$ & $|\lambda - \lambda_n^M|$ & $\delta_{MC}$ \\[3pt]
		\hline
		2   & $4.98 \times 10^{-2}$ &            & $4.68 \times 10^{-3}$ &          \\[3pt]
		4   & $1.28 \times 10^{-2}$ & $1.97$     & $3.20 \times 10^{-4}$ & $3.87$     \\[3pt]
		8   & $3.21 \times 10^{-3}$ & $1.99$     & $2.05 \times 10^{-5}$ & $3.96$     \\[3pt]
		16  & $8.03 \times 10^{-4}$ & $2.00$     & $1.29 \times 10^{-6}$ & $3.99$     \\[3pt]
		32  & $2.01 \times 10^{-4}$ & $2.00$     & $8.06 \times 10^{-8}$ & $4.00$     \\[3pt]
		64  & $5.02 \times 10^{-5}$ & $2.00$     & $5.04 \times 10^{-9}$ & $4.00$     \\[3pt]
		\hline
	\end{tabular}
\end{table}

\begin{table}[h!]
	\small
	\centering
	\setlength{\tabcolsep}{2.3pt}
	\renewcommand{\arraystretch}{1.2}
	\caption{Eigenfunction errors in variants of Collocation methods for Example~$2$}
	\label{table:04}
	\begin{tabular}{|c|cc|cc|cc|cc|}
		\hline
		$n$ & $\norm{\psi_n^C - E \psi_n^C }_\infty$ & $\delta_C$ 
		& $\norm{\psi_n^S - E \psi_n^S }_\infty$ & $\delta_{IC}$ 
		& $\norm{\psi_n^M - E \psi_n^M }_\infty$ & $\delta_{MC}$ 
		& $\norm{\widetilde{\psi}_n^M - E \widetilde{\psi}_n^M }_\infty$ & $\delta_{IMC}$ \\[3.5pt]
		\hline
		2   & $6.43\times10^{-1}$ &      & $9.07\times10^{-2}$ &      & $3.34\times10^{-2}$ &      & $4.71\times10^{-3}$ &      \\
		4   & $3.73\times10^{-1}$ & 0.79 & $2.49\times10^{-2}$ & 1.87 & $4.82\times10^{-3}$ & 2.79 & $3.21\times10^{-4}$ & 3.88   \\[3pt]
		8   & $1.94\times10^{-1}$ & 0.94 & $6.37\times10^{-3}$ & 1.96 & $6.24\times10^{-4}$ & 2.95 & $2.05\times10^{-5}$ & 3.97   \\[3pt]
		16  & $9.79\times10^{-2}$ & 0.99 & $1.60\times10^{-3}$ & 1.99 & $7.86\times10^{-5}$ & 2.99 & $1.29\times10^{-6}$ & 3.99   \\[3pt]
		32  & $4.90\times10^{-2}$ & 1.00 & $4.01\times10^{-4}$ & 2.00 & $9.85\times10^{-6}$ & 3.00 & $8.06\times10^{-8}$ & 4.00   \\[3pt]
		64  & $2.45\times10^{-2}$ & 1.00 & $1.00\times10^{-4}$ & 2.00 & $1.23\times10^{-6}$ & 3.00 & $5.04\times10^{-9}$ & 4.00   \\[3pt]
		\hline
	\end{tabular}
\end{table}

The numerical experiments in both examples clearly validate the theoretical convergence results. For the eigenvalue approximations (Tables~\ref{table:01} and \ref{table:03}), 
the classical collocation method exhibits second-order convergence, 
whereas the modified collocation method achieves fourth-order convergence. 
For the eigenfunction approximations (Tables~\ref{table:02} and \ref{table:04}), 
the classical method converges with first-order accuracy and its iterated version improves the rate to second order. Similarly, the modified collocation method attains third-order convergence, while its iterated version reaches fourth-order accuracy. Overall, the numerical results demonstrate that the modified collocation method provides a higher rate of convergence than the classical collocation method for both eigenvalue and eigenfunction approximations. Furthermore, the iterated versions improve the accuracy of the corresponding eigenfunction approximations without altering the theoretical convergence orders. These observations are consistent with the analytical error estimates established earlier.

\section{Conclusion}

In this paper, we analyze the convergence of approximate solutions for eigenvalue problems associated with integral operators having smooth kernels. Using interpolatory projection-based methods, we show that the modified colloacation method yields faster convergence of approximate eigenvalues compared to the classical collocation schemes. 
Furthermore, the iterated variants of these methods exhibit superconvergent behaviour, leading to improved approximations of eigenfunctions and spectral subspaces. 
Such improvements are particularly advantageous in practical computations, where higher accuracy is desired without increasing the dimension of the resulting linear system.

Our results extend existing work by providing explicit convergence rates and clarifying the role of iteration in enhancing spectral subspace approximations, while allowing the collocation points to be chosen independently of the zeros of special orthogonal polynomials. Possible directions for future research involve broadening the present framework to cover integral operators with non-smooth, singular or weakly singular kernels and developing refined asymptotic expansions for eigenvalue approximations.


\section*{Declarations}

\subsection*{Conflict of Interest}
The author has no conflict of interest to declare.

\subsection*{Funding}
No funding was received to assist with the preparation of this manuscript.

\subsection*{Acknowledgement}

The author gratefully acknowledges his PhD advisor, Dr. Gobinda Rakshit, for his guidance and support. The author also acknowledges RGIPT Jais and IIT Goa, India, for their institutional support during the course of this research.



\begin{thebibliography}{99}
	
	\bibitem{Ahues} 
	M. Ahues, A. Largillier, and B. V. Limaye, \textit{Spectral computations for bounded operators}, Chapman and Hall/CRC, London, 2001.
	
	\bibitem{CA-PS-DS-MT}
	C. Allouch, P. Sablonniere, D. Sbibih, and M. Tahrichi, Superconvergent Nystr\"om and degenerate kernel methods for eigenvalue problems. \textit{Applied Mathematics and Computation}, \textbf{217}:20 (2011), 7851--7866.
	\url{https://doi.org/10.1016/j.amc.2011.01.098}
	
	\bibitem{Atkinson1967}
	K. E. Atkinson, The numerical solution of the eigenvalue problem for compact integral operators. \textit{Transactions of the American Mathematical Society}, \textbf{129} (1967), 458--465.
	\url{https://doi.org/10.2307/1994601}
	
	\bibitem{Atkinson1975}
	K. E. Atkinson, Convergence rates for approximate eigenvalues of compact integral operators. \textit{SIAM Journal on Numerical Analysis}, \textbf{12}:2 (1975), 213--222.
	
	\bibitem{KEA0}
	K. E. Atkinson, \textit{The numerical solution of integral equations of the second kind}, Cambridge University Press, Cambridge, 1997.
	\url{https://doi.org/10.1137/0712020}
	
	\bibitem{KEA-IG-IS}
	K. E. Atkinson, I. Graham, and I. Sloan, Piecewise continuous collocation for integral equations. \textit{SIAM Journal on Numerical Analysis}, \textbf{20}:1 (1983), 172--186.
	\url{https://doi.org/10.1137/0720012}
	
	\bibitem{Babuska1987}
	I. Babu\v{s}ka and J. E. Osborn, Estimates for the errors in eigenvalue and eigenfunction approximation by Galerkin methods, with particular attention to the case of multiple eigenvalues. \textit{SIAM Journal on Numerical Analysis}, \textbf{24}:6 (1987), 1249--1276.
	\url{https://doi.org/10.1137/0724082}
	
	\bibitem{Babuska1989}
	I. Babu\v{s}ka and J. E. Osborn, Finite element-Galerkin approximation of the eigenvalues and eigenfunctions of selfadjoint problems. \textit{Mathematics of Computation}, \textbf{52}:186 (1989), 275--297.
	\url{https://doi.org/10.2307/2008468}
	
	\bibitem{Baker0} 
	C. T. H. Baker, \textit{The numerical treatment of integral equations}, Oxford University Press, Oxford, 1977.
	
	\bibitem{HB-CA-ZEA-KK}
	H. Bouda, C. Allouch, Z. El Allali, and K. Kant, Numerical solution of eigenvalue problems for a compact integral operator with Green's kernels. \textit{Advances in Operator Theory}, \textbf{9}:51 (2024).  
	\url{https://doi.org/10.1007/s43036-024-00352-7}
	
	\bibitem{Bramble1973}
	J. H. Bramble and J. E. Osborn, Rate of convergence estimates for nonselfadjoint eigenvalue approximations. \textit{Mathematics of Computation}, \textbf{27}:123 (1973), 525--549.
	
	\bibitem{Cha-Leb2} 
	F. Chatelin and R. Lebbar, Superconvergence results for the iterated projection method applied to a Fredholm integral equation of the second kind and the corresponding eigenvalue problem. \textit{Journal of Integral Equations}, \textbf{6}:1 (1984), 71--91. 
	\url{http://www.jstor.org/stable/26164159}
	
	\bibitem{Chatelin} 
	F. Chatelin, \textit{Spectral approximation of linear operators}, Academic Press, New York, 1983.
	
	\bibitem{Chen1997}
	M. Chen, Z. Chen, and G. Chen, \textit{Approximate Solutions of Operator Equation}, World Scientific, Singapore, 1997.
	
	\bibitem{Gnan1}
	N. Gnaneshwar, A degenerate kernel method for eigenvalue problems of compact integral operators. \textit{Advances in Computational Mathematics}, \textbf{27} (2007), 339--354.
	\url{https://doi.org/10.1007/s10444-005-9005-9}
	
	\bibitem{Hild}
	F. B. Hilderbrand, \textit{Introduction to Numerical Analysis}, 2nd edition, McGraw-Hill, 1974.
	
	\bibitem{Kato1976}
	T. Kato, \textit{Perturbation Theory for Linear Operators}, Springer-Verlag, Berlin, 1976.
	
	\bibitem{RPK5}
	R. P. Kulkarni, A new superconvergent projection method for approximate solutions of eigenvalue problems. \textit{Numerical Functional Analysis and Optimization}, \textbf{24}:1--2 (2003), 75--84.
	\url{https://doi.org/10.1081/NFA-120020246}
	
	\bibitem{RPK6}
	R. P. Kulkarni, A new superconvergent collocation method for eigenvalue problems. \textit{Mathematics of Computation}, \textbf{75}:254 (2006), 847--857.
	\url{https://doi.org/10.1090/S0025-5718-06-01871-0}
	
	
	
	\bibitem{Osborn}
	J. E. Osborn, Spectral Approximation for compact operators. \textit{Mathematics of Computation}, \textbf{29}:131 (1975), 712--725. 
	\url{https://doi.org/10.2307/2005282}
	
	\bibitem{Pallav2002}
	R. Pallav and A. Pedas, Quadratic spline collocation method for weakly singular integral equations and corresponding eigenvalue problem. \textit{Mathematical Modelling and Analysis}, \textbf{7}:2 (2002), 285--296.
	\url{https://doi.org/10.3846/13926292.2002.9637200}
	
	
	\bibitem{Sloan0}
	I. H. Sloan, Superconvergence. In \textit{Numerical Solution of Integral Equations}, Edited by M. Golberg, Plenum Press, New York (1990), 35--70.
	
	\bibitem{Sloan1} 
	I. H. Sloan, Iterated Galerkin method for eigenvalue problems. \textit{SIAM Journal on Numerical Analysis}, \textbf{13}:5 (1976), 753--760.
	\url{http://www.jstor.org/stable/2156103}
	
\end{thebibliography}
\end{document}